\newtheorem{definition}[equation]{Definition}
\newtheorem{lemma}[equation]{Lemma}
\newtheorem{proposition}[equation]{Proposition}
\newtheorem{theorem}[equation]{Theorem}
\newtheorem{corollary}[equation]{Corollary}
\newtheorem{remark}[equation]{Remark}
\newtheorem{problem}[equation]{Problem}
\newcommand\lemmaref[1]{Lemma~\ref{#1}}
\newcommand\propositionref[1]{Proposition~\ref{#1}}
\newcommand\theoremref[1]{Theorem~\ref{#1}}
\newcommand\corollaryref[1]{Corollary~\ref{#1}}
\newcommand\remarkref[1]{Remark~\ref{#1}}
\newcommand\problemref[1]{Problem~\ref{#1}}
\title{A criterion for discrete branching laws for Klein four symmetric pairs and its application to $E_{6(-14)}$}
\author{Haian HE}
\date{}
\address{Department of Mathematics, College of Sciences, Shanghai University, No. 99 Shangda Road, Baoshan District, Shanghai, 200444 China P. R.}
\email{hebe.hsinchu@yahoo.com.tw}
\subjclass[2010]{22E46}
\keywords{Klein four symmetric pair; branching law; discretely decomposable; unitarizable simple $(\mathfrak{g},K)$-module; associated variety}
\begin{document}
\begin{abstract}
Let $G$ be a noncompact connected simple Lie group, and $(G,G^\Gamma)$ a Klein four symmetric pair. In this paper, the author shows a necessary condition for the discrete decomposability of unitarizable simple $(\mathfrak{g},K)$-modules for Klein for symmetric pairs. Precisely, if certain conditions hold for $(G,G^\Gamma)$, there does not exist any unitarizable simple $(\mathfrak{g},K)$-module that is discretely decomposable as a $(\mathfrak{g}^\Gamma,K^\Gamma)$-module. As an application, for $G=\mathrm{E}_{6(-14)}$, the author obtains a complete classification of Klein four symmetric pairs $(G,G^\Gamma)$ with $G^\Gamma$ noncompact, such that there exists at least one nontrivial unitarizable simple $(\mathfrak{g},K)$-module that is discretely decomposable as a $(\mathfrak{g}^\Gamma,K^\Gamma)$-module and is also discretely decomposable as a $(\mathfrak{g}^\sigma,K^\sigma)$-module for some nonidentity element $\sigma\in\Gamma$.
\end{abstract}
\maketitle
\section{introduction}
Let $G$ be a noncompact connected simple Lie group, and $\Gamma$ a Klein four subgroup of the automorphism group $\mathrm{Aut}G$ of $G$. Then $(G,G^\Gamma)$ forms a Klein four symmetric pair, where $G^\Gamma$ is the subgroup of the fixed points under action of all elements in $\Gamma$ on $G$. Write $\mathfrak{g}$ for the Lie algebra of $G$, and $\mathfrak{g}^\Gamma$ for the Lie subalgebra of $\mathfrak{g}$ corresponding to $G^\Gamma$. Take a $\Gamma$-stable maximal compact subgroup $K$ of $G$ so that $K^\Gamma=K\cap G^\Gamma$ is a maximal compact subgroup of $G^\Gamma$. For a $(\mathfrak{g},K)$-module $X$, it can be regarded as a $(\mathfrak{g}^\Gamma,K^\Gamma)$-module by restriction. A fundamental problem is to classify all the Klein four symmetric pairs $(G,G^\Gamma)$ such that there exists at least one nontrivial unitarizable simple $(\mathfrak{g},K)$-module that is discretely decomposable as a $(\mathfrak{g}^\Gamma,K^\Gamma)$-module. If $G^\Gamma$ is compact, the problem is trivial because any unitarizable simple $(\mathfrak{g},K)$-module is discretely decomposable as a $(\mathfrak{g}^\Gamma,K^\Gamma)$-module. Hence, the author is only interested in the case when $G^\Gamma$ is noncompact.

When $G$ is supposed to be an exceptional simple Lie group of Hermitian type, the discretely decomposable restrictions of unitarizable simple $(\mathfrak{g},K)$-modules for Klein four symmetric pairs ware studied in \cite{H1}, \cite{H2}, and \cite{H3}. In this case, one may define Klein four symmetric pairs of holomorphic type. The discretely decomposable condition for Klein four symmetric pairs of holomorphic type is easy to deal with because any highest / lowest weight simple $(\mathfrak{g},K)$-module is discretely decomposable as a $(\mathfrak{g}^\Gamma,K^\Gamma)$-module. As for Klein four symmetric pairs of non-holomorphic type, it is difficult. In \cite{H3}, the author classified the Klein four symmetric pairs $(G,G^\Gamma)$ for exceptional simple Lie groups of Hermitian type such that there exists at least one nontrivial unitarizable simple $(\mathfrak{g},K)$-module $X$ that is both discretely decomposable as a $(\mathfrak{g}^\Gamma,K^\Gamma)$-module and is discretely decomposable as a $(\mathfrak{g}^\sigma,K^\sigma)$-module for some nonidentity element $\sigma\in\Gamma$ of anti-holomorphic type. Here, $\mathfrak{g}^\sigma:=\{X\in\mathfrak{g}\mid\sigma X=X\}$.

Suppose that $\Gamma$ is generated by the two involutive automorphisms $\sigma$ and $\tau$, and then $\Gamma=\{1,\sigma,\tau,\sigma\tau\}$. The nonidentity elements in $\Gamma$ define three symmetric pairs $(G,G^\sigma)$, $(G,G^\tau)$, and $(G,G^{\sigma\tau})$ for $G$, where $G^\sigma$ (respectively, $G^\tau$, or $G^{\sigma\tau}$ ) is the subgroup of the fixed points under the action of $\sigma$ (respectively, $\tau$, or $\sigma\tau$) on $G$. The branching rules for symmetric pairs were studied by many mathematicians, among which the classification of symmetric pairs $(G,G')$ such that there exists at least one nontrivial unitarizable simple $(\mathfrak{g},K)$-module which is discretely decomposable as a $(\mathfrak{g}',K')$-module was completely solved in \cite[Theorem 5.2 \& Corollary 5.8]{KO2} based on the criteria established in \cite{Ko2}, \cite{Ko3}, and \cite{Ko4}, where $\mathfrak{g}'$ is the Lie subalgebra of $\mathfrak{g}$ corresponding to $G'$.

Associated varieties are useful tools to study the discrete decomposability of the restrictions of unitarizable simple $(\mathfrak{g},K)$-modules. Roughly speaking, for a simple Lie group $G$ and its reductive subgroup $G'$, if $X$ is a unitarizable simple $(\mathfrak{g},K)$-module which is discretely decomposable as a $(\mathfrak{g}',K')$-module, and if $Y$ is a nonzero simple $(\mathfrak{g}',K')$-submodule of $X$, then the projection of the associated variety of $X$ from $\mathfrak{g}$ to $\mathfrak{g}'$ is contained in the associated variety of $Y$. Thus, the associated varieties offer a necessary condition for the discretely decomposable restriction of a unitarizable simple $(\mathfrak{g},K)$-module. In \cite{KO2}, the authors just made use of associated varieties to find a crucial necessary condition for discretely decomposable restrictions for symmetric pairs. As the natural generalization of symmetric pairs, the author studies Klein four symmetric pairs.

The classification of Klein four symmetric pairs is far from clear as that of symmetric pairs, so it is much hard to study their branching laws. However, the classification of Klein four subgroups of automorphism groups of compact Lie algebras was done in \cite{HY}, and a more general classification of elementary abelian $2$-subgroups of automorphism groups of compact Lie algebras was done in \cite{Y}. The author will make use of the classification results to study Klein four symmetric pairs.

The article is organized as follows. After a quick review of associated varieties, noncompact maximal roots, and minimal orbits, the author will recall an important result of discretely decomposable restrictions of unitarizable simple $(\mathfrak{g},K)$-modules for symmetric pairs in Section 2. Then in Section 3, the author will show the main result in this article, which offers a necessary condition for the existence of unitarizable simple $(\mathfrak{g},K)$-modules whose restrictions are discretely decomposable restrictions for Klein four symmetric pairs, see \theoremref{10} and \corollaryref{12} below. Finally, the author will apply the main result to $\mathrm{E}_{6(-14)}$, and will solve a branching problem for Klein four symmetric pairs of $\mathrm{E}_{6(-14)}$, which is another important result in this article, see \theoremref{18} below, and is the complete answer to the problem raised in \cite{H3} for $\mathrm{E}_{6(-14)}$.

The author is supported by National Natural Science Foundation of China (Grant Number: 11901378).
\section{Preliminary}
\subsection{Associated varieties and minimal orbits}
Let $G$ be a noncompact connected simple Lie group with Lie algebra $\mathfrak{g}$. Fix a Cartan decomposition $\mathfrak{g}=\mathfrak{k}+\mathfrak{p}$, write $\mathfrak{g}_\mathbb{C}=\mathfrak{k}_\mathbb{C}+\mathfrak{p}_\mathbb{C}$ for its complexification, $\mathfrak{g}_\mathbb{C}^*=\mathfrak{k}_\mathbb{C}^*+\mathfrak{p}_\mathbb{C}^*$ for the dual space, and $K$ for the connected subgroup of $G$ with Lie algebra $\mathfrak{k}$. Denote by $K_\mathbb{C}$ the subgroup of the inner automorphism group $\mathrm{Int}\mathfrak{g}_\mathbb{C}$ of $\mathfrak{g}_\mathbb{C}$ generated by $\mathrm{exp}(\mathrm{ad}\mathfrak{k}_\mathbb{C})$. The adjoint group $K_\mathbb{C}$ acts canonically on $\mathfrak{p}_\mathbb{C}$ and on the dual space $\mathfrak{p}_\mathbb{C}^*$. Take a Cartan subalgebra $\mathfrak{t}$ of $\mathfrak{k}$ and choose a positive system $\Delta^+(\mathfrak{k}_\mathbb{C},\mathfrak{t}_\mathbb{C})$. Similarly, denote by $\mathfrak{t}^*$ the dual space of $\mathfrak{t}$.

Recall the definition for the maximal noncompact root as in \cite[Definition 2.1]{KO2}. If $G$ is not of Hermitian type, then $K_\mathbb{C}$ acts irreducibility on $\mathfrak{p}_\mathbb{C}$, in which case the maximal noncompact root $\beta\in\sqrt{-1}\mathfrak{t}^*$ is defined to be the highest weight in $\mathfrak{p}_\mathbb{C}^*$. If $G$ is of Hermitian type, then $\mathfrak{p}_\mathbb{C}=\mathfrak{p}_++\mathfrak{p}_-$ is an irreducible decomposition as a $K_\mathbb{C}$-representation, in which case the maximal noncompact root $\beta\in\sqrt{-1}\mathfrak{t}^*$ is defined to be the highest weight in $\mathfrak{p}_+^*$ where $\mathfrak{p}_\pm^*$ is the dual space of $\mathfrak{p}_\pm$. Notice that in either case, $-\beta$ is also a weight in $\mathfrak{p}_\mathbb{C}^*$; in particular, $-\beta$ is a weight in $\mathfrak{p}_-^*$ for $\mathfrak{g}$ of Hermitian type. The weight spaces $\mathfrak{p}_\beta^*$ and $\mathfrak{p}_{-\beta}^*$ are $1$-dimensional.

Let $U(\mathfrak{g}_\mathbb{C})$ be the universal enveloping algebra of $\mathfrak{g}_\mathbb{C}$ with the standard increasing filtration of $\{U_j(\mathfrak{g}_\mathbb{C})\}_{j\in\mathbb{Z}_{\geq0}}$. Suppose that $X$ is a finitely generated $\mathfrak{g}_\mathbb{C}$-module. A filtration $\{X_i\}_{i\in\mathbb{Z}_{\geq0}}$ of $X$ is called a good filtration if it satisfies the following conditions:
\begin{itemize}
\item $X=\bigcup\limits_{i\in\mathbb{Z}_{\geq0}}X_i$;
\item $X_i$ is a finite dimensional subspace for any $i\in\mathbb{Z}_{\geq0}$;
\item $U_j(\mathfrak{g}_\mathbb{C})X_i\subseteq X_{i+j}$ for any $i,j\in\mathbb{Z}_{\geq0}$;
\item there exists $n\in\mathbb{Z}_{\geq0}$ such that $U_j(\mathfrak{g}_\mathbb{C})X_i=X_{i+j}$ for any $i\geq n$ and $j\in\mathbb{Z}_{\geq0}$.
\end{itemize}
The graded algebra $\mathrm{gr}U(\mathfrak{g}_\mathbb{C}):=\bigoplus\limits_{j\in\mathbb{Z}_{\geq0}}U_j(\mathfrak{g}_\mathbb{C})/U_{j-1}(\mathfrak{g}_\mathbb{C})$ is isomorphic to the symmetric algebra $S(\mathfrak{g}_\mathbb{C})$, and $\mathrm{gr}X:=\bigoplus\limits_{i\in\mathbb{Z}_{\geq0}}X_i/X_{i-1}$ forms a graded $S(\mathfrak{g}_\mathbb{C})$-module. Let $\mathrm{Ann}_{S(\mathfrak{g}_\mathbb{C})}(\mathrm{gr}X):=\{f\in S(\mathfrak{g}_\mathbb{C})\mid fv=0\textrm{ for any }v\in\mathrm{gr}X\}$, and the associated variety of $X$ is defined to be $\mathcal{V}_{\mathfrak{g}_\mathbb{C}}(X):=\{x\in\mathfrak{g}_\mathbb{C}^*\mid f(x)=0\textrm{ for any }f\in\mathrm{Ann}_{S(\mathfrak{g}_\mathbb{C})}(\mathrm{gr}X)\}$, which does not depend on the choice of good filtration.

Let $\mathcal{N}(\mathfrak{g}_\mathbb{C}^*)$ be the nilpotent cone of $\mathfrak{g}_\mathbb{C}^*$, and set $\mathcal{N}(\mathfrak{p}_\mathbb{C}^*):=\mathcal{N}(\mathfrak{g}_\mathbb{C}^*)\cap\mathfrak{p}_\mathbb{C}^*$. It is well known that $\mathcal{V}_{\mathfrak{g}_\mathbb{C}}(X)\subseteq\mathcal{N}(\mathfrak{g}_\mathbb{C}^*)$. Identity $\mathfrak{g}_\mathbb{C}^*$ with $\mathfrak{g}_\mathbb{C}$ by the Killing form, and the adjoint action of $K_\mathbb{C}$ on $\mathfrak{g}_\mathbb{C}$ induces the coadjoint action on $\mathfrak{g}_\mathbb{C}^*$.
\begin{proposition}\label{7}
Let $G$ be a noncompact connected simple Lie group.
\begin{enumerate}[(1)]
\item If $G$ is not of Hermitian type, then there is a unique minimal $K_\mathbb{C}$-orbit in $\mathcal{N}(\mathfrak{p}_\mathbb{C}^*)$, which is given by $K_\mathbb{C}\cdot(\mathfrak{p}_\beta^*\setminus\{0\})$.
\item If $G$ is of Hermitian type, then there are two minimal $K_\mathbb{C}$-orbits in $\mathcal{N}(\mathfrak{p}_\mathbb{C}^*)$, which are given by $K_\mathbb{C}\cdot(\mathfrak{p}_\beta^*\setminus\{0\})$ and $K_\mathbb{C}\cdot(\mathfrak{p}_{-\beta}^*\setminus\{0\})$. They have the same dimension.
\end{enumerate}
\end{proposition}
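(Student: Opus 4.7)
The plan is to identify $\mathfrak{p}_\beta^*$ (and in the Hermitian case also $\mathfrak{p}_{-\beta}^*$) as a highest-weight line inside an irreducible $K_{\mathbb{C}}$-summand of $\mathfrak{p}_{\mathbb{C}}^*$, and then invoke the classical fact that in any irreducible rational representation of a connected reductive group the orbit of a nonzero highest-weight vector is the unique minimal nonzero orbit. Nilpotency is the easy input: under the Killing-form identification $\mathfrak{p}_{\mathbb{C}}^*\cong\mathfrak{p}_{\mathbb{C}}$, a nonzero element of $\mathfrak{p}_{\pm\beta}^*$ is a weight vector of $\mathfrak{g}_{\mathbb{C}}$ for a nonzero weight of the torus $\mathfrak{t}_{\mathbb{C}}$, and such a vector is automatically $\mathrm{ad}$-nilpotent since $\mathrm{ad}(\mathfrak{t}_{\mathbb{C}})$ strictly shifts weights. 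Hence $\mathfrak{p}_{\pm\beta}^*\setminus\{0\}\subseteq\mathcal{N}(\mathfrak{p}_{\mathbb{C}}^*)$, and by $K_{\mathbb{C}}$-invariance of the nilpotent cone the whole orbit remains inside $\mathcal{N}(\mathfrak{p}_{\mathbb{C}}^*)$.

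For part (1), $\mathfrak{p}_{\mathbb{C}}^*$ is a single irreducible $K_{\mathbb{C}}$-module with highest weight $\beta$, and $\mathfrak{p}_\beta^*$ is its $1$-dimensional highest-weight line. Fix any $0\neq v\in\mathfrak{p}_\beta^*$. I would apply Borel's fixed-point theorem to the projective closure $\overline{K_{\mathbb{C}}\cdot[v]}\subseteq\mathbb{P}(\mathfrak{p}_{\mathbb{C}}^*)$: any closed $K_{\mathbb{C}}$-stable subvariety of projective space contains a fixed point of a chosen Borel subgroup $B\subseteq K_{\mathbb{C}}$, and the unique $B$-fixed point in $\mathbb{P}(\mathfrak{p}_{\mathbb{C}}^*)$ is $[\mathfrak{p}_\beta^*]$. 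Thus $K_{\mathbb{C}}\cdot[v]$ is the unique closed projective $K_{\mathbb{C}}$-orbit. Because the maximal torus of $K_{\mathbb{C}}$ acts on $v$ through the nonzero character $\beta$, one has $\mathbb{C}^\times v\subseteq K_{\mathbb{C}}\cdot v$, so the affine orbit $K_{\mathbb{C}}\cdot v$ coincides with the punctured affine cone over the closed projective orbit, and consequently $K_{\mathbb{C}}\cdot v$ sits in the closure of every nonzero $K_{\mathbb{C}}$-orbit in $\mathfrak{p}_{\mathbb{C}}^*$. This yields both uniqueness and minimality of the nonzero orbit in $\mathcal{N}(\mathfrak{p}_{\mathbb{C}}^*)$.

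For part (2), the decomposition $\mathfrak{p}_{\mathbb{C}}^*=\mathfrak{p}_+^*\oplus\mathfrak{p}_-^*$ is $K_{\mathbb{C}}$-stable, so applying part (1) inside each summand produces a minimal nonzero orbit in each. Inside $\mathfrak{p}_+^*$ this is $K_{\mathbb{C}}\cdot(\mathfrak{p}_\beta^*\setminus\{0\})$. For $\mathfrak{p}_-^*$, the Killing-form duality $\mathfrak{p}_-^*\cong\mathfrak{p}_+$ identifies $-\beta$ as the lowest weight of $\mathfrak{p}_-^*$; since a lowest-weight vector is $K_{\mathbb{C}}$-conjugate to a highest-weight vector via a representative of the longest Weyl element of $K_{\mathbb{C}}$, the minimal orbit there is $K_{\mathbb{C}}\cdot(\mathfrak{p}_{-\beta}^*\setminus\{0\})$. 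An orbit through $v=v_++v_-$ with both components nonzero cannot be minimal: applying the Borel argument in $\mathbb{P}(\mathfrak{p}_{\mathbb{C}}^*)$, its projective closure must contain one of the two highest-weight lines of $\mathfrak{p}_\pm^*$, so $\overline{K_{\mathbb{C}}\cdot v}$ strictly contains one of the two candidates. Equality of dimensions follows from complex conjugation of $\mathfrak{g}_{\mathbb{C}}$ relative to the real form $\mathfrak{g}$: this map is $K$-equivariant and exchanges $\mathfrak{p}_+$ and $\mathfrak{p}_-$ (which are the $\pm\sqrt{-1}$-eigenspaces for the adjoint action of the center of $\mathfrak{k}$), hence carries one minimal orbit to the other preserving real, and therefore complex, dimension.

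The main obstacle is the minimality passage via Borel's theorem — specifically, ensuring that the projective argument lifts cleanly to the affine one. This hinges on the nontriviality of $\beta$ as a character of the maximal torus of $K_{\mathbb{C}}$, which forces the affine orbit to already cover the whole punctured highest-weight line and therefore to sit in the closure of every nonzero orbit. Once this is in place, both parts reduce to tracking weight data, with the only Hermitian-type subtlety being the identification of the minimal orbit of $\mathfrak{p}_-^*$ with its lowest-weight line $\mathfrak{p}_{-\beta}^*$ rather than with the intrinsic highest-weight line.
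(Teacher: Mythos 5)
The paper offers no argument of its own here (it simply cites \cite[Proposition 2.2]{KO2}), so the only question is whether your proof stands on its own; it does not, because there is a genuine gap at exactly the step you flag as the crux. You deduce, from $[v]\in\overline{K_{\mathbb{C}}\cdot[w]}$ in $\mathbb{P}(\mathfrak{p}_{\mathbb{C}}^*)$ (Borel fixed point), that $K_{\mathbb{C}}\cdot v\subseteq\overline{K_{\mathbb{C}}\cdot w}$ in $\mathfrak{p}_{\mathbb{C}}^*$, and you justify the lift by the conicity of the \emph{minimal} orbit, i.e.\ $\mathbb{C}^{\times}v\subseteq K_{\mathbb{C}}\cdot v$. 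That is a non sequitur: the affine cone over $\overline{K_{\mathbb{C}}\cdot[w]}$ is the closure of $\mathbb{C}^{\times}K_{\mathbb{C}}\cdot w$, not of $K_{\mathbb{C}}\cdot w$, so what is needed is conicity of the orbit of $w$ --- the big orbit, not the small one. Indeed your intermediate claim is false as literally stated (for ``every nonzero $K_{\mathbb{C}}$-orbit in $\mathfrak{p}_{\mathbb{C}}^*$''): take $0\neq w\in\mathfrak{p}$, a real point. The Killing form $B$ is $K_{\mathbb{C}}$-invariant with $B(w,w)>0$, so $B(x,x)=B(w,w)\neq 0$ for every $x\in\overline{K_{\mathbb{C}}\cdot w}$, whereas $B(v,v)=0$ because $v$ is nilpotent; hence $v\notin\overline{K_{\mathbb{C}}\cdot w}$, even though Borel's theorem does place $[v]$ (or one of the two highest-weight lines, in the Hermitian case) inside $\overline{K_{\mathbb{C}}\cdot[w]}$. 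The projective limit points produced by Borel's theorem can come from directions of unboundedness of the affine orbit, and those are precisely the points the affine closure misses. Consequently your uniqueness claims --- part (1), and the exclusion in part (2) of orbits through $v_{+}+v_{-}$ with both components nonzero --- are unproven: without more, nothing you have said rules out another nilpotent orbit whose closure is itself together with $\{0\}$.

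The missing ingredient is the standard fact that every nonzero $K_{\mathbb{C}}$-orbit in $\mathcal{N}(\mathfrak{p}_{\mathbb{C}}^*)$ is itself conical: any nonzero nilpotent $e\in\mathfrak{p}_{\mathbb{C}}$ lies in a normal $\mathfrak{sl}_{2}$-triple $(e,h,f)$ with $h\in\mathfrak{k}_{\mathbb{C}}$ and $f\in\mathfrak{p}_{\mathbb{C}}$ (Kostant--Rallis, the symmetric-pair form of Jacobson--Morozov), whence $\mathrm{Ad}(\exp(th))\,e=e^{2t}e$ gives $\mathbb{C}^{\times}e\subseteq K_{\mathbb{C}}\cdot e$. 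With this, the closure of every nonzero orbit in $\mathcal{N}(\mathfrak{p}_{\mathbb{C}}^*)$ is the affine cone over its projective closure, your Borel argument lifts correctly, and both parts go through; note the lifting is then needed, and available, only for nilpotent orbits, which is all the proposition concerns. The rest of your write-up is sound modulo wording: the nilpotency of $\mathfrak{p}_{\pm\beta}^*$ (it is $\mathrm{ad}(x)$, not $\mathrm{ad}(\mathfrak{t}_{\mathbb{C}})$, that shifts $\mathfrak{t}_{\mathbb{C}}$-weights by $\beta\neq 0$), the identification of exactly two $B$-fixed points in the Hermitian case (using that $\mathfrak{p}_{+}^*$ and $\mathfrak{p}_{-}^*$ are non-isomorphic $K_{\mathbb{C}}$-modules), and the conjugation argument for equidimensionality (conjugation is twisted $K_{\mathbb{C}}$-equivariant, not merely $K$-equivariant, which is what makes it carry $K_{\mathbb{C}}$-orbits to $K_{\mathbb{C}}$-orbits) are all correct in substance.
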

\begin{proof}
See \cite[Proposition 2.2]{KO2}.
\end{proof}
\begin{remark}\label{11}
It is known that the associated variety $\mathcal{V}_{\mathfrak{g}_\mathbb{C}}(X)$ is a $K_\mathbb{C}$-stable closed subset of $\mathfrak{p}_\mathbb{C}^*$. Then it follows from \propositionref{7} that $K_\mathbb{C}\cdot\mathfrak{p}_\beta^*\subseteq\mathcal{V}_{\mathfrak{g}_\mathbb{C}}(X)$ if $G$ is not of Hermitian type, and that $K_\mathbb{C}\cdot\mathfrak{p}_\beta^*\subseteq\mathcal{V}_{\mathfrak{g}_\mathbb{C}}(X)$ or $K_\mathbb{C}\cdot\mathfrak{p}_{-\beta}^*\subseteq\mathcal{V}_{\mathfrak{g}_\mathbb{C}}(X)$ if $G$ is of Hermitian type.
\end{remark}
\subsection{Discrete decomposability for symmetric pairs}
Let $G'$ be a reductive subgroup of $G$ with the Lie algebra $\mathfrak{g}'$. Take a maximal compact subgroup $K$ of $G$, which is defined by a Cartan involution $\theta$ on $G$. In particular, it is assumed that $\theta(G')=G'$; equivalently, $K':=K\cap G'$ is a maximal compact subgroup of $G'$.
\begin{definition}\label{1}
A $(\mathfrak{g},K)$-module $X$ is called discretely decomposable as a $(\mathfrak{g}',K')$-module if there exists an increasing filtration $\{X_i\}_{i\in\mathbb{Z}^+}$ of $(\mathfrak{g}',K')$-modules such that $\bigcup\limits_{i\in\mathbb{Z}^+}X_i=X$ and each $X_i$ is of finite length as a $(\mathfrak{g}',K')$-module for any $i\in\mathbb{Z}^+$.
\end{definition}
\begin{proposition}\label{3}
Let $X$ be a unitarizable simple $(\mathfrak{g},K)$-module. Then the following conditions are equivalent.
\begin{enumerate}[(1)]
\item $X$ is discretely decomposable as a $(\mathfrak{g}^\Gamma,K')$-module;
\item there exists a simple $(\mathfrak{g}',K')$-module $Y$ such that $\mathrm{Hom}_{(\mathfrak{g}',K')}(Y,X)\neq\{0\}$;
\item $X$ is isomorphic to an algebraic direct sum of simple $(\mathfrak{g}',K')$-modules.
\end{enumerate}
\end{proposition}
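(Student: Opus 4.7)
The plan is to prove the cyclic implications $(3)\Rightarrow(1)\Rightarrow(2)\Rightarrow(3)$. The first two are formal; only the last requires unitarizability.

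For $(3)\Rightarrow(1)$, write $X=\bigoplus_{\alpha\in A}Y_\alpha$ as the hypothesized algebraic direct sum. Since $X$ is a unitarizable simple, hence $K$-admissible, $(\mathfrak{g},K)$-module, it is countable-dimensional over $\mathbb{C}$, so $A$ is at most countable; enumerating its elements and setting $X_i:=Y_{\alpha_1}\oplus\cdots\oplus Y_{\alpha_i}$ gives an exhausting increasing filtration whose $i$-th term has length at most $i$ as a $(\mathfrak{g}',K')$-module. For $(1)\Rightarrow(2)$, any one $X_i$ (say $X_1$) has finite length and thus contains a simple $(\mathfrak{g}',K')$-submodule $Y$; the composite $Y\hookrightarrow X_1\hookrightarrow X$ is a nonzero element of $\mathrm{Hom}_{(\mathfrak{g}',K')}(Y,X)$.

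For $(2)\Rightarrow(3)$ I would proceed in two stages: first $(2)\Rightarrow(1)$, then $(1)\Rightarrow(3)$ using unitarizability. The implication $(2)\Rightarrow(1)$ is Kobayashi's theorem from \cite{Ko2}; its proof uses associated varieties or leading exponents to show that the presence of a single simple $(\mathfrak{g}',K')$-submodule forces the entire restriction to be assembled from discrete pieces. I would invoke this result rather than reprove it, and it is the main technical obstacle.

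For $(1)\Rightarrow(3)$, fix a $G$-invariant positive definite Hermitian form on $X$. Each $X_i$ in the filtration inherits a $(\mathfrak{g}',K')$-invariant positive-definite Hermitian form and is of finite length; a standard argument via orthogonal complements on finite-length unitarizable Harish-Chandra modules shows such a module is completely reducible. Inductively, set $W_i:=X_{i-1}^\perp\cap X_i$, the orthogonal complement of $X_{i-1}$ inside $X_i$, so that $X_i=X_{i-1}\oplus W_i$; each $W_i$ is then a finite direct sum of simple $(\mathfrak{g}',K')$-modules, and $X=\bigoplus_iW_i$ is the required algebraic decomposition into simples.
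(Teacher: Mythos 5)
Your implications $(3)\Rightarrow(1)$ and $(1)\Rightarrow(2)$ are correct (countability of the index set, then a simple submodule of a nonzero finite-length filtration term), and your orthogonal-complement argument for $(1)\Rightarrow(3)$ is also sound: it works because finite-length $(\mathfrak{g}',K')$-modules are admissible, so each $K'$-isotypic component is finite-dimensional, distinct isotypic components are orthogonal, and the splitting $X_i=X_{i-1}\oplus(X_{i-1}^\perp\cap X_i)$ can be verified isotypic component by isotypic component. For comparison, the paper offers no argument at all: its proof of this proposition is the single citation \cite[Lemma 1.3 \& Lemma 1.5]{Ko4}, so these parts of your write-up already go beyond what the paper records.

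The genuine gap is your treatment of $(2)\Rightarrow(1)$, which is the real content of the proposition, and it has three problems. First, the citation is wrong: \cite{Ko2} is Kobayashi's 1994 paper, whose theme is the $K'$-admissibility criterion; the implication you need is exactly \cite[Lemma 1.3 \& Lemma 1.5]{Ko4}, i.e.\ the very reference the paper uses for the whole proposition, so your proof silently collapses into the same citation, but pointed at the wrong paper. Second, your description of the method (``associated varieties or leading exponents'') is not how this implication can be proved: in \cite{Ko4}, associated varieties produce a \emph{necessary} condition for discrete decomposability (this is \propositionref{8} of the present paper), i.e.\ an obstruction; a containment of associated varieties cannot \emph{force} discrete decomposability, so no such argument can yield $(2)\Rightarrow(1)$. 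Third, and most substantively, the statement as you phrase it --- one simple $(\mathfrak{g}',K')$-submodule forces the whole restriction to be discretely decomposable --- is \emph{false} for general unitarizable $(\mathfrak{g},K)$-modules: if $X=X_1\oplus X_2$ with $X_1$ restricted discretely decomposably and $X_2$ not, then condition $(2)$ holds while $(1)$ fails, since any exhausting filtration of $X$ by finite-length $(\mathfrak{g}',K')$-modules would intersect to one of $X_2$. Hence simplicity of $X$ as a $(\mathfrak{g},K)$-module is indispensable in $(2)\Rightarrow(1)$, yet it never enters your outline except to get countability. A correct route consumes simplicity as follows: using unitarity one shows that the sum $X_{\mathrm{disc}}$ of \emph{all} simple $(\mathfrak{g}',K')$-submodules of $X$ is a $(\mathfrak{g},K)$-submodule (for a simple submodule $W$, the image of the finite-length $(\mathfrak{g}',K')$-module $\mathfrak{g}_{\mathbb{C}}\otimes W\to X$ is finite-length, hence --- by your own orthogonal-complement argument --- semisimple, so $\mathfrak{g}_{\mathbb{C}}\cdot W\subseteq X_{\mathrm{disc}}$, and $K$-stability follows from $\mathfrak{k}$-stability since $K$ is connected); then $X_{\mathrm{disc}}\neq\{0\}$ by $(2)$ and simplicity of $X$ gives $X_{\mathrm{disc}}=X$, which yields $(3)$ directly. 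Without either this argument or the correct citation, your proof of the hard implication does not stand.
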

\begin{proof}
See \cite[Lemma 1.3 \& Lemma 1.5]{Ko4}.
\end{proof}
The natural embedding of the Lie algebras $\mathfrak{g}'\hookrightarrow\mathfrak{g}$ gives a projection of the complexified dual spaces $\mathrm{pr}_{\mathfrak{g}\rightarrow\mathfrak{g}'}:\mathfrak{g}_\mathbb{C}^*\twoheadrightarrow\mathfrak{g'}_\mathbb{C}^*$.
\begin{proposition}\label{8}
Let $X$ be a $(\mathfrak{g},K)$-module of finite length. If $X$ is discretely decomposable as a $(\mathfrak{g}',K')$-module, then $\mathrm{pr}_{\mathfrak{g}\rightarrow\mathfrak{g}'}\mathcal{V}_\mathbb{C}(X)\subseteq\mathcal{N}(\mathfrak{g'}_\mathbb{C}^*)$, where $\mathcal{N}(\mathfrak{g'}_\mathbb{C}^*)$ is the nilpotent cone of $\mathfrak{g'}_\mathbb{C}^*$.
\end{proposition}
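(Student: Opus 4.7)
The plan is to reduce to the existence of a simple $(\mathfrak{g}',K')$-subquotient $Y$ of $X$ via \propositionref{3}, to establish the sharper inclusion
\[
\mathrm{pr}_{\mathfrak{g}\to\mathfrak{g}'}\mathcal{V}_{\mathfrak{g}_\mathbb{C}}(X)\;\subseteq\;\mathcal{V}_{\mathfrak{g}'_\mathbb{C}}(Y),
\]
and then to conclude from the general fact, recalled earlier in the section, that the associated variety of any finitely generated $\mathfrak{g}'_\mathbb{C}$-module lies in the nilpotent cone $\mathcal{N}(\mathfrak{g}'^*_\mathbb{C})$.

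For the reduction, since $\mathcal{V}_{\mathfrak{g}_\mathbb{C}}$ is additive over composition factors of a finite-length module and discrete decomposability as a $(\mathfrak{g}',K')$-module descends to simple $(\mathfrak{g},K)$-subquotients (via the characterization of \propositionref{3}), I may assume that $X$ itself is simple. Then by \propositionref{3}(2) there exists a simple $(\mathfrak{g}',K')$-module $Y$ with a nonzero $(\mathfrak{g}',K')$-homomorphism into $X$; by passing to the $(\mathfrak{g}',K')$-socle of $X$ I realize $Y$ as a submodule of $X$. To establish the displayed inclusion of varieties, I would construct compatible good filtrations: pick a finite-dimensional $K'$-stable generator $Y^{(0)}$ of $Y$, enlarge it to a finite-dimensional $K$-stable generator $X^{(0)}$ of $X$ with $Y^{(0)}\subseteq X^{(0)}$, and set $Y^{(n)}:=U_n(\mathfrak{g}'_\mathbb{C})Y^{(0)}$ and $X^{(n)}:=U_n(\mathfrak{g}_\mathbb{C})X^{(0)}$. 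The desired inclusion of varieties then translates into an ideal-theoretic comparison between $\mathrm{Ann}_{S(\mathfrak{g}_\mathbb{C})}(\mathrm{gr}\,X)$ and $\mathrm{Ann}_{S(\mathfrak{g}'_\mathbb{C})}(\mathrm{gr}\,Y)$ inside $S(\mathfrak{g}_\mathbb{C})$, exploiting that $\mathrm{pr}_{\mathfrak{g}\to\mathfrak{g}'}$ is the dual of the algebra inclusion $S(\mathfrak{g}'_\mathbb{C})\hookrightarrow S(\mathfrak{g}_\mathbb{C})$.

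The main obstacle is precisely this ideal-theoretic comparison. The naive inclusion $Y\hookrightarrow X$ of filtered modules yields only
\[
\mathrm{Ann}_{S(\mathfrak{g}_\mathbb{C})}(\mathrm{gr}\,X)\cap S(\mathfrak{g}'_\mathbb{C})\;\subseteq\;\mathrm{Ann}_{S(\mathfrak{g}'_\mathbb{C})}(\mathrm{gr}\,Y),
\]
which by elimination theory produces the opposite containment $\mathcal{V}_{\mathfrak{g}'_\mathbb{C}}(Y)\subseteq\overline{\mathrm{pr}_{\mathfrak{g}\to\mathfrak{g}'}\mathcal{V}_{\mathfrak{g}_\mathbb{C}}(X)}$ rather than the one sought. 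Reversing the containment is where discrete decomposability must be used essentially: one exploits that $Y$ does not merely inject abstractly into $X$ but sits inside an algebraic direct-sum decomposition of $X$ by \propositionref{3}(3), so that the $S(\mathfrak{g}'_\mathbb{C})$-module $\mathrm{gr}\,X$ is controlled up to radicals by $\mathrm{gr}\,Y$. Carrying out this step rigorously, rather than relying on generalities of filtered modules, is the technical heart of the proof, and is where I expect the argument to require the most care.
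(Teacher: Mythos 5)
You should note first that the paper does not actually prove \propositionref{8}: its ``proof'' is a citation to \cite[Corollary 3.4]{Ko4}, so your proposal is in effect an attempt to reprove Kobayashi's theorem. As such it has two genuine gaps. The smaller one is your reliance on \propositionref{3}: that proposition is stated only for \emph{unitarizable simple} $(\mathfrak{g},K)$-modules, whereas \propositionref{8} concerns an arbitrary $(\mathfrak{g},K)$-module of finite length. After your reduction to a simple composition factor, that factor need not be unitarizable, so you are not entitled to the simple $(\mathfrak{g}',K')$-submodule $Y$ of part (2), nor --- and this is what your final step leans on --- to the algebraic direct-sum decomposition of part (3). (Discrete decomposability does pass to subquotients, by results in \cite{Ko4}, but the equivalences in \propositionref{3} genuinely use unitarity.)

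The larger gap is that your argument stops exactly where the proof has to happen. You correctly observe --- and this is a genuine insight --- that comparing filtrations along $Y\hookrightarrow X$ gives the inclusion in the wrong direction, but the mechanism you then gesture at (``$\mathrm{gr}\,X$ is controlled up to radicals by $\mathrm{gr}\,Y$'' via the direct sum) is not even well formed: $X$ is not finitely generated over $U(\mathfrak{g}'_\mathbb{C})$, so it has no good filtration as a $\mathfrak{g}'_\mathbb{C}$-module, and the direct sum has infinitely many summands whose associated varieties you cannot compare a priori. The actual reversal mechanism, which is Kobayashi's, uses a \emph{common generating subspace} instead of the submodule $Y$. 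Choose a finite-dimensional subspace $X_0$ generating $X$ over $U(\mathfrak{g}_\mathbb{C})$ (possible since $X$ has finite length). By \definitionref{1}, $X_0$ lies in some filtrand $X_N$ of finite length over $(\mathfrak{g}',K')$, so $W:=U(\mathfrak{g}'_\mathbb{C})X_0\subseteq X_N$ has finite length as a $\mathfrak{g}'_\mathbb{C}$-module; this is the only place the hypothesis of discrete decomposability is used. Now let $q\in S(\mathfrak{g}'_\mathbb{C})$ be any homogeneous adjoint-invariant polynomial of positive degree $d$. Since $\mathrm{gr}\,Z(U(\mathfrak{g}'_\mathbb{C}))$ is the algebra of invariants, $q$ is the symbol of some central element $z_0\in U_d(\mathfrak{g}'_\mathbb{C})$; since $W$ has finite length with finitely many infinitesimal characters $\chi_1,\dots,\chi_N$ (with multiplicity), the element $z:=\prod_{i=1}^{N}\bigl(z_0-\chi_i(z_0)\bigr)$ annihilates $W$, in particular $zX_0=0$, and $z\in U_{Nd}(\mathfrak{g}'_\mathbb{C})\subseteq U_{Nd}(\mathfrak{g}_\mathbb{C})$ has symbol $q^N$. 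With respect to the good filtration $F_j:=U_j(\mathfrak{g}_\mathbb{C})X_0$ of $X$, the relation $zX_0=0$ says precisely that $q^N$ annihilates the image $\overline{X_0}$ of $X_0$ in degree $0$ of $\mathrm{gr}\,X$; and since $\overline{X_0}$ generates $\mathrm{gr}\,X$ over $S(\mathfrak{g}_\mathbb{C})$ while $q^N$ is central in that commutative ring, $q^N\,\mathrm{gr}\,X=0$. Hence $q$, viewed in $S(\mathfrak{g}_\mathbb{C})$ as the polynomial $q\circ\mathrm{pr}_{\mathfrak{g}\rightarrow\mathfrak{g}'}$ on $\mathfrak{g}_\mathbb{C}^*$, vanishes on $\mathcal{V}_{\mathfrak{g}_\mathbb{C}}(X)$; as $\mathcal{N}(\mathfrak{g'}_\mathbb{C}^*)$ is exactly the common zero locus of the positive-degree homogeneous invariants, this gives $\mathrm{pr}_{\mathfrak{g}\rightarrow\mathfrak{g}'}\mathcal{V}_{\mathfrak{g}_\mathbb{C}}(X)\subseteq\mathcal{N}(\mathfrak{g'}_\mathbb{C}^*)$. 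The moral is that the reversal does not come from any direct-sum decomposition but from building the $\mathfrak{g}$-filtration and the $\mathfrak{g}'$-filtration on the \emph{same} subspace $X_0$, whose $U(\mathfrak{g}'_\mathbb{C})$-span has finite length by discrete decomposability; this also makes the whole detour through $Y$, and hence through unitarity, unnecessary.
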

\begin{proof}
See \cite[Corollary 3.4]{Ko4}.
\end{proof}
Now let $(G,G^\sigma)$ be a symmetric pair, where $\sigma\in\mathrm{Aut}G$ is an involutive automorphism which commutes with $\theta$. Then $\sigma$ stabilizes $K$, and $K^\sigma=K\cap G^\sigma$ is a maximal compact subgroup of $G^\sigma$. Without confusion, use the same symbol $\sigma$ for its differential on the Lie algebra $\mathfrak{g}$ of $G$, which stabilizes the Lie subalgebra $\mathfrak{k}$ corresponding to $K$. Take a $\sigma$-stable Cartan subalgebra $\mathfrak{t}$ of $\mathfrak{k}$ and a positive root system $\Delta^+(\mathfrak{k}_\mathbb{C},\mathfrak{t}_\mathbb{C})$. Identity $\mathfrak{g}_\mathbb{C}^*$ with $\mathfrak{g}_\mathbb{C}$ by the Killing form, and $\sigma$ can be regarded as an involutive automorphism of $\mathfrak{g}_\mathbb{C}^*$.

Remember the definition for the maximal noncompact root $\beta$ mentioned in Subsection 2.1.
\begin{proposition}\label{9}
Let $G$ be a noncompact connect simple Lie group, and $(G,G^\sigma)$ a symmetric pair. Then there exists a nontrivial unitarizable simple $(\mathfrak{g},K)$-module $X$ such that $X$ is discretely decomposable as a $(\mathfrak{g}^\sigma,K^\sigma)$-module if and only if $\sigma\beta\neq-\beta$.
\end{proposition}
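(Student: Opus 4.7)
The plan is to prove the two directions separately; the necessity is a clean reduction to \propositionref{8} via \remarkref{11}, while the sufficiency requires an explicit construction of a unitarizable simple module.

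For necessity, suppose $X$ is a nontrivial unitarizable simple $(\mathfrak{g},K)$-module that is discretely decomposable as a $(\mathfrak{g}^\sigma,K^\sigma)$-module. By \remarkref{11}, either $K_\mathbb{C}\cdot\mathfrak{p}_\beta^*\subseteq\mathcal{V}_{\mathfrak{g}_\mathbb{C}}(X)$ or (in the Hermitian case) $K_\mathbb{C}\cdot\mathfrak{p}_{-\beta}^*\subseteq\mathcal{V}_{\mathfrak{g}_\mathbb{C}}(X)$. I would argue by contradiction: assume $\sigma\beta=-\beta$. Identifying $\mathfrak{g}_\mathbb{C}^*$ with $\mathfrak{g}_\mathbb{C}$ via the Killing form, the projection $\mathrm{pr}_{\mathfrak{g}\to\mathfrak{g}^\sigma}$ becomes the $\sigma$-averaging map $v\mapsto\tfrac{1}{2}(v+\sigma v)$. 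Pick a nonzero root vector $X_\beta\in\mathfrak{p}_\beta^*$; since $\sigma$ commutes with $\theta$ it preserves $\mathfrak{p}_\mathbb{C}^*$, and because the weight space $\mathfrak{p}_{-\beta}^*$ is one-dimensional, $\sigma X_\beta=cX_{-\beta}$ for a nonzero scalar $c$. Hence $\mathrm{pr}_{\mathfrak{g}\to\mathfrak{g}^\sigma}(X_\beta)=\tfrac{1}{2}(X_\beta+cX_{-\beta})$. Since $X_\beta$, $X_{-\beta}$, $[X_\beta,X_{-\beta}]$ span an $\mathfrak{sl}_2$-triple, a direct matrix calculation inside this $\mathfrak{sl}_2$ shows $X_\beta+cX_{-\beta}$ is semisimple with eigenvalues $\pm\sqrt{c}\neq 0$, hence nonnilpotent in $\mathfrak{g}_\mathbb{C}$ and a fortiori nonnilpotent in $(\mathfrak{g}^\sigma)_\mathbb{C}$. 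This contradicts \propositionref{8}.

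For sufficiency, suppose $\sigma\beta\neq-\beta$. I would construct a nontrivial unitarizable simple $(\mathfrak{g},K)$-module whose associated variety is the closure of the minimal $K_\mathbb{C}$-orbit $K_\mathbb{C}\cdot\mathfrak{p}_\beta^*$ (or $K_\mathbb{C}\cdot\mathfrak{p}_{-\beta}^*$ in the Hermitian case), for instance a minimal representation of $G$ or a cohomologically induced $A_\mathfrak{q}(\lambda)$-module from a suitably chosen $\theta$- and $\sigma$-stable parabolic subalgebra. Kobayashi's discrete decomposability criteria from \cite{Ko2}, \cite{Ko3}, \cite{Ko4} reduce discrete decomposability as a $(\mathfrak{g}^\sigma,K^\sigma)$-module to a convexity condition on the asymptotic $K$-support; the hypothesis $\sigma\beta\neq-\beta$ ensures that $\beta$ and $\sigma\beta$ lie in a common open half-space of $\sqrt{-1}\mathfrak{t}^*$, from which the convexity condition can be verified.

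The main obstacle is the sufficiency direction. While the necessity is essentially one application of \propositionref{8}, producing an explicit unitarizable simple module with the required property is delicate; a completely uniform construction is not available, and in practice one proceeds through the Berger classification of symmetric pairs $(G,G^\sigma)$ case by case, checking at each step that the projected minimal orbit is nilpotent and then invoking Kobayashi's criterion to extract the module. This is the strategy carried out in \cite[Theorem 5.2 \& Corollary 5.8]{KO2}, to which I would ultimately appeal.
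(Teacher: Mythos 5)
Your proposal is correct, but it is organized quite differently from the paper's proof, which consists of a single citation: \propositionref{9} is quoted wholesale from \cite[Theorem 5.2 \& Corollary 5.8]{KO2}, with no argument given for either direction. Your necessity argument---projecting a root vector $X_\beta$ by the averaging map $v\mapsto\tfrac12(v+\sigma v)$ and observing that $X_\beta+cX_{-\beta}$ ($c\neq0$) is a nonzero semisimple element of the $\mathfrak{sl}_2$ spanned by $X_\beta$, $X_{-\beta}$, $H_\beta$, contradicting \propositionref{8}---is sound, and it is exactly the mechanism the paper itself deploys later in \lemmaref{2} for the Klein four case; there the projection has four terms which no longer lie in a single $\mathfrak{sl}_2$, so the paper instead arranges the projected element to be real (in $\mathfrak{p}^*$) to obtain semisimplicity, whereas your two-term situation admits the direct matrix computation. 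Two points you gloss over: in the Hermitian case \remarkref{11} may only give $K_\mathbb{C}\cdot\mathfrak{p}_{-\beta}^*\subseteq\mathcal{V}_{\mathfrak{g}_\mathbb{C}}(X)$, so the computation must also be run with $-\beta$ in place of $\beta$ (immediate, since the hypothesis $\sigma\beta=-\beta$ is symmetric under $\beta\mapsto-\beta$; compare \remarkref{5}); and the step from ``nonzero semisimple in $\mathfrak{g}_\mathbb{C}$'' to ``not in $\mathcal{N}(\mathfrak{g}_\mathbb{C}^{\sigma*})$'' uses that $\mathfrak{g}^\sigma$ is reductive in $\mathfrak{g}$, so Jordan decompositions are compatible---standard, but worth saying. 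For sufficiency you rightly concede that no short uniform construction exists and ultimately appeal to \cite[Theorem 5.2 \& Corollary 5.8]{KO2}; since that result is stated as an equivalence, the citation alone already covers both directions, which is precisely why the paper proves nothing itself. What your route buys is a self-contained, transparent proof of the ``only if'' half that foreshadows \lemmaref{2} and \theoremref{10}; what the paper's route buys is brevity.
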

\begin{proof}
See \cite[Theorem 5.2 \& Corollary 5.8]{KO2}.
\end{proof}
\section{Main Results for Klein four symmetric pairs}
Recall the definition for Klein four symmetric pairs.
\begin{definition}\label{4}
Let $G$ (respectively, $\mathfrak{g}$) be a real simple Lie group (respectively, Lie algebra), and let $\Gamma$ be a Klein four subgroup of $\mathrm{Aut}G$ (respectively, $\mathrm{Aut}\mathfrak{g}$). Denote by $G^\Gamma$ (respectively, $\mathfrak{g}^\Gamma$) the subgroup (respectively, subalgebra) of the fixed points under the action of all elements in $\Gamma$ on $G$ (respectively, $\mathfrak{g}$). Then $(G,G^\Gamma)$ (respectively, $(\mathfrak{g},\mathfrak{g}^\Gamma)$) is called a Klein four symmetric pair. In particular, if $G$ (respectively, $\mathfrak{g}$) is a real simple Lie group (respectively, Lie algebra) of Hermitian type, and every nonidentity element $\sigma\in\Gamma$ defines a symmetric pair of holomorphic type, then $(G,G^\Gamma)$ (respectively, $(\mathfrak{g},\mathfrak{g}^\Gamma)$) is called a Klein four symmetric pair of holomorphic type.
\end{definition}
For a Klein four symmetric pair $(G,G^\Gamma)$, the maximal compact subgroup $K$ of $G$ is always supposed to be $\Gamma$-stable in the sense that $\sigma(K)=K$ for all $\sigma\in\Gamma$, so that $K^\Gamma=K\cap G^\Gamma$ is a maximal compact subgroup of $G^\Gamma$. Also, fix a $\Gamma$-stable Cartan subalgebra $\mathfrak{t}$ of $\mathfrak{k}$. Similar to the case of symmetric pairs, $\Gamma$ acts on the dual space $\mathfrak{g}_\mathbb{C}^*$ which is identified with $\mathfrak{g}_\mathbb{C}$ by the Killing form.
\begin{proposition}\label{6}
If $G$ is a simple Lie group of Hermitian type and $(G,G^\Gamma)$ is a Klein four symmetric pair of holomorphic type, then any irreducible unitary highest / lowest weight $(\mathfrak{g},K)$-module $X$ is discretely decomposable as a $(\mathfrak{g}^\Gamma,K^\Gamma)$-module and is discretely decomposable as a $(\mathfrak{g}^\sigma,K^\sigma)$-module for any $\sigma\in\Gamma$.
\end{proposition}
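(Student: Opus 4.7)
The plan is to appeal to the standard theorem on holomorphic branching: if $G$ is of Hermitian type and $G' \subseteq G$ is a reductive subgroup whose adjoint action preserves the decomposition $\mathfrak{p}_\mathbb{C} = \mathfrak{p}_+ \oplus \mathfrak{p}_-$, then every unitarisable simple highest/lowest weight $(\mathfrak{g}, K)$-module is automatically $K'$-admissible; hence, by the equivalence recorded in \propositionref{3}, it is discretely decomposable as a $(\mathfrak{g}', K')$-module. This theorem, established in Kobayashi's branching papers and underlying the ``if'' direction of \propositionref{9}, will be applied twice: once with $G' = G^\Gamma$, to obtain the first claim, and once with $G' = G^\sigma$ for each non-identity $\sigma \in \Gamma$, to obtain the second.

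To set the stage I would first translate the hypothesis into Lie-algebraic terms. The definition of ``symmetric pair of holomorphic type'' in \definitionref{4} is equivalent to the single condition $\sigma \mathfrak{p}_+ = \mathfrak{p}_+$ (and therefore $\sigma \mathfrak{p}_- = \mathfrak{p}_-$). Assuming this equivalence, every non-identity element of $\Gamma$ preserves $\mathfrak{p}_\pm$; since the non-identity elements generate $\Gamma$, the whole group $\Gamma$ preserves $\mathfrak{p}_\pm$, and in particular the fixed-point subgroup $G^\Gamma$ does as well. Consequently $\mathrm{Ad}(K^\Gamma_\mathbb{C})$ and each $\mathrm{Ad}(K^\sigma_\mathbb{C})$ stabilises both summands of $\mathfrak{p}_\mathbb{C}$, so the hypothesis of the holomorphic branching theorem is satisfied in each case, and both assertions of the proposition follow immediately.

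The main---and arguably only nontrivial---step is the translation in the previous paragraph: justifying that ``$(G, G^\sigma)$ is of holomorphic type'' is the same condition as ``$\sigma \mathfrak{p}_+ = \mathfrak{p}_+$''. This is a classical observation coming from the realisation of $\mathfrak{p}_+$ as the holomorphic tangent space of the Hermitian symmetric space $G/K$ at the base point: the involution $\sigma$ descends to a holomorphic involution of $G/K$, whose fixed-point locus is a complex submanifold corresponding to the holomorphic symmetric pair, precisely when its differential preserves the complex structure, which on tangent spaces is preservation of $\mathfrak{p}_+$. Once this translation is in place, the proof reduces to a direct invocation of the holomorphic branching theorem. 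Note that I would not need to perform any restriction-in-stages argument, since the theorem is applied in one shot to the reductive subgroup $G^\Gamma$ itself; this avoids dealing with the fact that the intermediate fixed-point subgroups $G^\sigma$ need not themselves be of Hermitian type in a way compatible with further restriction.
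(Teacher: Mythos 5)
There is a genuine gap, and it lies exactly in the step you call the main one: the ``holomorphic branching theorem'' you invoke is misstated, and with the hypothesis you give it is not a theorem at all. Write $Z$ for a generator of the center of $\mathfrak{k}$, so that $\mathfrak{p}_\pm$ are the $\pm\sqrt{-1}$-eigenspaces of $\mathrm{ad}(Z)$. If your hypothesis ``the adjoint action of $G'$ preserves $\mathfrak{p}_\mathbb{C}=\mathfrak{p}_+\oplus\mathfrak{p}_-$'' is read as a condition on $K'$ (or $K'_\mathbb{C}$), then it is \emph{automatically} satisfied by every $\theta$-stable reductive subgroup: since $K$ is connected and $Z$ is central in $\mathfrak{k}$, all of $\mathrm{Ad}(K)$ fixes $Z$ and hence preserves both eigenspaces $\mathfrak{p}_\pm$. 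A vacuous hypothesis cannot imply $K'$-admissibility; concretely, for $G=\mathrm{SL}(2,\mathbb{R})$ and $G'$ the diagonal split torus (so $K'=\{\pm I\}$, which certainly preserves $\mathfrak{p}_\pm$), the holomorphic discrete series restricted to $G'$ has purely continuous spectrum and is not discretely decomposable. If instead the hypothesis is read as a condition on the full adjoint action of $G'$, it is \emph{never} satisfied by a noncompact $G'$ --- not even by $G'=G^\sigma$ of holomorphic type, and not even by $G'=G$ --- because $[\mathfrak{p}_\mathbb{C}\cap\mathfrak{g}'_\mathbb{C},\mathfrak{p}_+]$ hits $\mathfrak{k}_\mathbb{C}$. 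Either way, the theorem you plan to apply ``in one shot'' does not exist in the form you state, so both applications collapse. The related inference ``each nonidentity $\sigma\in\Gamma$ preserves $\mathfrak{p}_\pm$, hence $G^\Gamma$ preserves $\mathfrak{p}_\pm$'' is also a non sequitur: what $\Gamma$ preserves and what the $\Gamma$-fixed subgroup preserves are different things (the conclusion happens to hold for $K^\Gamma$, but only for the trivial reason above).

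The condition that actually does the work is not that the \emph{subgroup} preserves $\mathfrak{p}_+$ but that the \emph{involutions} do, equivalently $\sigma Z=Z$ for every nonidentity $\sigma\in\Gamma$, equivalently $Z\in\mathfrak{k}^\Gamma$, i.e.\ $\mathfrak{g}^\Gamma$ contains the center of $\mathfrak{k}$. This is where ``holomorphic type'' enters, and it is a condition that genuinely fails for anti-holomorphic pairs (there $\sigma Z=-Z$), which is why it can discriminate. The correct mechanism, and essentially what the paper's one-line proof extracts from Kobayashi's results, is: a unitary highest (or lowest) weight module is a $K$-subquotient of $S(\mathfrak{p}_\mp)\otimes F$ with $F$ finite dimensional, so its $Z$-eigenspaces are finite dimensional and it is admissible with respect to the circle $\exp(\mathbb{R}Z)$; since this circle lies inside $K^\Gamma$ (respectively $K^\sigma$), admissibility passes to the larger group $K^\Gamma$ (respectively $K^\sigma$) by Kobayashi's criterion (this is the content of the citations to \cite[Theorem 2.9(1), Examples 2.13 and 3.3]{Ko3}); finally $K^\Gamma$-admissibility implies discrete decomposability as a $(\mathfrak{g}^\Gamma,K^\Gamma)$-module by \cite[Proposition 1.6]{Ko4}. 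Your proof would be repaired by replacing the condition ``$\mathrm{Ad}(K'_\mathbb{C})$ stabilises $\mathfrak{p}_\pm$'' with ``$Z\in\mathfrak{k}'$'' throughout, and invoking the admissibility chain just described rather than the nonexistent general branching theorem.
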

\begin{proof}
The conclusion follows from \cite[Theorem 2.9(1) \& Example 2.13 \& Example 3.3]{Ko3} and \cite[Proposition 1.6]{Ko4} immediately.
\end{proof}
Now consider the discrete decomposability of the restrictions for Klein four symmetric pairs for general cases. Recall \propositionref{9} that $\sigma\beta\neq-\beta$ is an equivalent condition for the existence of unitarizable simple $(\mathfrak{g},K)$-module whose restriction is discretely decomposable for a symmetric pair $(G,G^\sigma)$. The following lemma plays a crucial role in the main result of this article.
\begin{lemma}\label{2}
Let $(G,G^\Gamma)$ be a Klein four symmetric pair. If there are two involutive automorphisms $\sigma$ and $\tau$ in $\Gamma$ such that $\sigma\beta=-\tau\beta\neq\pm\beta$, then $\mathrm{pr}_{\mathfrak{g}\rightarrow\mathfrak{g}^\Gamma}(K_\mathbb{C}\cdot\mathfrak{p}_\beta^*)\nsubseteq\mathcal{N}(\mathfrak{g}_\mathbb{C}^{\Gamma*})$.
\end{lemma}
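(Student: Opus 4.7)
The plan is to exhibit a concrete element of $\mathrm{pr}_{\mathfrak{g}\rightarrow\mathfrak{g}^\Gamma}(K_\mathbb{C}\cdot\mathfrak{p}_\beta^*)$ whose non-nilpotence is detected by the Killing form $\kappa$ of $\mathfrak{g}_\mathbb{C}$. Take any nonzero $E_\beta\in\mathfrak{p}_\beta^*$ and, via the Killing-form identification $\mathfrak{g}_\mathbb{C}^*\cong\mathfrak{g}_\mathbb{C}$ used throughout the paper, regard it as a nonzero vector in the $\mathfrak{t}_\mathbb{C}$-weight space $\mathfrak{p}_\beta\subset\mathfrak{p}_\mathbb{C}$. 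Since $\Gamma$ acts by automorphisms and so preserves $\kappa$, the $\kappa$-orthogonal complement of $\mathfrak{g}^\Gamma_\mathbb{C}$ in $\mathfrak{g}_\mathbb{C}$ is the direct sum of the non-trivial $\Gamma$-isotypic components, and therefore $\mathrm{pr}_{\mathfrak{g}\rightarrow\mathfrak{g}^\Gamma}$ becomes the averaging operator $\tfrac{1}{4}(1+\sigma+\tau+\sigma\tau)$. Setting
\[
X:=4\,\mathrm{pr}_{\mathfrak{g}\rightarrow\mathfrak{g}^\Gamma}(E_\beta)=E_\beta+\sigma E_\beta+\tau E_\beta+\sigma\tau E_\beta\in\mathfrak{g}^\Gamma_\mathbb{C},
\]
the lemma reduces to showing $X\notin\mathcal{N}(\mathfrak{g}_\mathbb{C}^{\Gamma*})$.

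The hypothesis $\sigma\beta=-\tau\beta\neq\pm\beta$ forces $\sigma\tau\beta=-\beta$ and makes the four $\mathfrak{t}_\mathbb{C}$-weights $\beta,\sigma\beta,-\sigma\beta,-\beta$ pairwise distinct, so the four summands of $X$ sit in four distinct weight spaces of $\mathfrak{p}_\mathbb{C}$. I then expand $\kappa(X,X)$ into sixteen terms $\kappa(\gamma_1 E_\beta,\gamma_2 E_\beta)$; since distinct $\mathfrak{t}_\mathbb{C}$-weight spaces are $\kappa$-orthogonal, only the pairs $(\gamma_1,\gamma_2)\in\{(1,\sigma\tau),(\sigma\tau,1),(\sigma,\tau),(\tau,\sigma)\}$ survive. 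The $\Gamma$-invariance $\kappa(\gamma v,\gamma w)=\kappa(v,w)$ collapses all four contributions to a common value and yields
\[
\kappa(X,X)=4\,\kappa(E_\beta,\sigma\tau E_\beta).
\]

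The crucial non-vanishing $\kappa(E_\beta,\sigma\tau E_\beta)\neq 0$ comes from the fact that $\kappa$ pairs $\mathfrak{p}_\beta$ and $\mathfrak{p}_{-\beta}$ non-degenerately (the Cartan decomposition $\mathfrak{g}=\mathfrak{k}+\mathfrak{p}$ is $\kappa$-orthogonal, and distinct weight spaces in $\mathfrak{p}_\mathbb{C}$ are mutually $\kappa$-orthogonal) together with the $1$-dimensionality of $\mathfrak{p}_{\pm\beta}$ recalled in Subsection~2.1. Because the automorphism $\sigma\tau$ carries $\mathfrak{p}_\beta$ isomorphically onto $\mathfrak{p}_{\sigma\tau\beta}=\mathfrak{p}_{-\beta}$, the vector $\sigma\tau E_\beta$ is a nonzero element of $\mathfrak{p}_{-\beta}$, so the pairing is nonzero. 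Since $\mathrm{tr}(\mathrm{ad}_{\mathfrak{g}}(X)^2)=\kappa(X,X)\neq 0$, the endomorphism $\mathrm{ad}_{\mathfrak{g}}(X)$ cannot be nilpotent, and hence $X$ is not nilpotent in $\mathfrak{g}_\mathbb{C}$.

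Finally, I pass from non-nilpotence in $\mathfrak{g}_\mathbb{C}$ to non-nilpotence in $\mathfrak{g}^\Gamma_\mathbb{C}$. The fixed subalgebra $\mathfrak{g}^\Gamma_\mathbb{C}$ of the finite automorphism group $\Gamma$ is reductive in $\mathfrak{g}_\mathbb{C}$, and by uniqueness of the Jordan decomposition both components $Z_s,Z_n$ of any $Z\in\mathfrak{g}^\Gamma_\mathbb{C}$ are $\Gamma$-equivariant and therefore already lie in $\mathfrak{g}^\Gamma_\mathbb{C}$; consequently nilpotence in $\mathfrak{g}^\Gamma_\mathbb{C}$ coincides with nilpotence in $\mathfrak{g}_\mathbb{C}$. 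Thus $\mathrm{pr}_{\mathfrak{g}\rightarrow\mathfrak{g}^\Gamma}(E_\beta)=X/4\notin\mathcal{N}(\mathfrak{g}_\mathbb{C}^{\Gamma*})$, and this proves the claim. The subtlest point is precisely the non-vanishing $\kappa(E_\beta,\sigma\tau E_\beta)\neq 0$, where the hypotheses $\sigma\beta\neq\pm\beta$ and $\sigma\beta=-\tau\beta$ combine with $\dim\mathfrak{p}_{\pm\beta}=1$ to ensure that the $(\beta,-\beta)$-contributions to $X$ do not cancel and produce a genuine non-nilpotent piece.
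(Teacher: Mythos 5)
Your proof is correct, and it reaches the conclusion by a genuinely different mechanism than the paper's. Both arguments begin the same way: under the Killing-form identifications, $\mathrm{pr}_{\mathfrak{g}\rightarrow\mathfrak{g}^\Gamma}$ is the averaging operator $\tfrac14(1+\sigma+\tau+\sigma\tau)$, and the hypothesis $\sigma\beta=-\tau\beta\neq\pm\beta$ makes the four weights $\beta$, $\sigma\beta$, $-\sigma\beta$, $-\beta$ pairwise distinct. From there the paper computes no pairing at all: it replaces $x\in\mathfrak{p}_\beta^*$ by $y=x+\overline{\sigma\tau(x)}$ (still in $\mathfrak{p}_\beta^*$, and nonzero after rescaling $x$), which forces $\mathrm{pr}_{\mathfrak{g}\rightarrow\mathfrak{g}^\Gamma}(y)$ to be fixed by complex conjugation and hence to lie in the real form $\mathfrak{p}^*=\mathfrak{p}_\mathbb{C}^*\cap\mathfrak{g}^*$, and it then invokes the fact that a nonzero element of $\mathfrak{p}^*$ is semisimple, so not nilpotent. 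You instead detect non-nilpotence with the trace form: $\kappa(X,X)=4\,\kappa(E_\beta,\sigma\tau E_\beta)\neq0$, using weight-space orthogonality, $\Gamma$-invariance of $\kappa$, the nondegeneracy of the $\kappa$-pairing between $\mathfrak{p}_\beta$ and $\mathfrak{p}_{-\beta}$, and their $1$-dimensionality. Your route is more self-contained (no conjugation trick, no rescaling, and it shows the projection of \emph{every} nonzero vector of $\mathfrak{p}_\beta^*$ is non-nilpotent), while the paper's route yields the stronger structural statement that its chosen projection is an honest semisimple element of $\mathfrak{p}^*$.

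One step you should tighten: the closing claim that nilpotence in $\mathfrak{g}_\mathbb{C}^\Gamma$ coincides with nilpotence in $\mathfrak{g}_\mathbb{C}$ is not fully delivered by the Jordan-decomposition remark, since that remark by itself does not exclude the possibility of an element with nilpotent $\mathrm{ad}_{\mathfrak{g}^\Gamma}$-action whose $\mathfrak{g}_\mathbb{C}$-semisimple part is a nonzero central element of $\mathfrak{g}_\mathbb{C}^\Gamma$; one must also use that $\mathcal{N}(\mathfrak{g}_\mathbb{C}^{\Gamma*})$ meets the center only in $0$. Two standard ways to finish: apply Jacobson--Morozov (a nilpotent element of $[\mathfrak{g}_\mathbb{C}^\Gamma,\mathfrak{g}_\mathbb{C}^\Gamma]$ embeds in an $\mathfrak{sl}_2$-triple there, hence acts nilpotently on all of $\mathfrak{g}_\mathbb{C}$), or, more directly, note that $Z\mapsto\kappa(Z,Z)$ restricts to an $\mathrm{Ad}(G^\Gamma)$-invariant polynomial on $\mathfrak{g}_\mathbb{C}^\Gamma$ vanishing at $0$, hence vanishing on all of $\mathcal{N}(\mathfrak{g}_\mathbb{C}^{\Gamma*})$, so $\kappa(X,X)\neq0$ alone already concludes the proof. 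This repair is routine and comparable in size to what the paper itself leaves implicit, so I regard your proof as correct.
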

\begin{proof}
Obviously, $\sigma\tau\beta=-\beta$ because of $\sigma\beta=-\tau\beta$. Take a nonzero element $x\in\mathfrak{p}_\beta^*$, and one has $\sigma\tau(x)\in\mathfrak{p}_{-\beta}^*$. Let $\overline{x}$ denote the complex conjugation of $x$ with respect to the real form $\mathfrak{g}$ of $\mathfrak{g}_\mathbb{C}$, and one also has $\overline{x}\in\mathfrak{p}_{-\beta}^*$. Hence, $\overline{\sigma\tau(x)}\in\mathfrak{p}_\beta^*$ and $y:=x+\overline{\sigma\tau(x)}\in\mathfrak{p}_\beta^*$. Replace $x$ by $cx$ for some $c\in\mathbb{C}$ if necessary, $y$ can be always assumed to be nonzero. It is known that neither $\sigma(y)$ nor $\tau(y)$ is in $\mathfrak{p}_{\pm\beta}^*$ because $\sigma\beta=-\tau\beta\neq\pm\beta$, and hence $\mathrm{pr}_{\mathfrak{g}\rightarrow\mathfrak{g}^\Gamma}(y)=\frac14(y+\sigma(y)+\tau(y)+\sigma\tau(y))$ is nonzero. Moreover, one has  $\mathrm{pr}_{\mathfrak{g}\rightarrow\mathfrak{g}^\Gamma}(y)=\frac14(x+\sigma(x)+\tau(x)+\sigma\tau(x)+\overline{x}+\overline{\sigma(x)}+\overline{\tau(x)}+ \overline{\sigma\tau(x)})\in\mathfrak{p}_\mathbb{C}^*\cap\mathfrak{g}^*=\mathfrak{p}^*$ which is a nonzero semisimple element. Therefore, $\mathrm{pr}_{\mathfrak{g}\rightarrow\mathfrak{g}^\Gamma}(y)\notin\mathcal{N}(\mathfrak{g}_\mathbb{C}^{\Gamma*})$ and hence $\mathrm{pr}_{\mathfrak{g}\rightarrow\mathfrak{g}^\Gamma}(K_\mathbb{C}\cdot\mathfrak{p}_\beta^*)\nsubseteq\mathcal{N}(\mathfrak{g}_\mathbb{C}^{\Gamma*})$.
\end{proof}
\begin{remark}\label{5}
The condition $\sigma\beta=-\tau\beta\neq\pm\beta$ in \lemmaref{2} is equivalent to $\sigma(-\beta)=-\tau(-\beta)\neq\pm\beta$, and hence one also obtains that $\mathrm{pr}_{\mathfrak{g}\rightarrow\mathfrak{g}^\Gamma}(K_\mathbb{C}\cdot\mathfrak{p}_{-\beta}^*)\nsubseteq\mathcal{N}(\mathfrak{g}_\mathbb{C}^{\Gamma*})$.
\end{remark}
The author may state the main theorem now.
\begin{theorem}\label{10}
Let $(G,G^\Gamma)$ be a Klein four symmetric pair. If there are two involutive automorphisms $\sigma$ and $\tau$ in $\Gamma$ such that $\sigma\beta=-\tau\beta\neq\pm\beta$, then there does not exist any nontrivial unitarizable simple $(\mathfrak{g},K)$-module that is discretely decomposable as a $(\mathfrak{g}^\Gamma,K^\Gamma)$-module.
\end{theorem}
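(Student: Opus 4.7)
The plan is to argue by contradiction, stringing together the three ingredients already in place: the minimal-orbit containment in Remark 11, the associated-variety obstruction in Proposition 8, and the explicit non-nilpotent projections supplied by Lemma 2 and Remark 5.

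Suppose, toward a contradiction, that some nontrivial unitarizable simple $(\mathfrak{g},K)$-module $X$ is discretely decomposable as a $(\mathfrak{g}^\Gamma,K^\Gamma)$-module. I would first invoke Remark 11 to conclude that $\mathcal{V}_{\mathfrak{g}_\mathbb{C}}(X)$ contains $K_\mathbb{C}\cdot\mathfrak{p}_\beta^*$ if $G$ is not of Hermitian type, and contains at least one of $K_\mathbb{C}\cdot\mathfrak{p}_\beta^*$ or $K_\mathbb{C}\cdot\mathfrak{p}_{-\beta}^*$ if $G$ is of Hermitian type. The nontriviality of $X$ is essential here: it guarantees that $\mathcal{V}_{\mathfrak{g}_\mathbb{C}}(X)$ is not contained in $\{0\}$, so by Proposition 7 it must absorb a minimal $K_\mathbb{C}$-orbit in $\mathcal{N}(\mathfrak{p}_\mathbb{C}^*)$.

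Next, the discrete decomposability hypothesis lets me apply Proposition 8 with $G'=G^\Gamma$, yielding
\[\mathrm{pr}_{\mathfrak{g}\rightarrow\mathfrak{g}^\Gamma}\mathcal{V}_{\mathfrak{g}_\mathbb{C}}(X)\subseteq\mathcal{N}(\mathfrak{g}_\mathbb{C}^{\Gamma*}).\]
Combining this with the previous step forces at least one of
\[\mathrm{pr}_{\mathfrak{g}\rightarrow\mathfrak{g}^\Gamma}(K_\mathbb{C}\cdot\mathfrak{p}_\beta^*)\subseteq\mathcal{N}(\mathfrak{g}_\mathbb{C}^{\Gamma*})\quad\text{or}\quad\mathrm{pr}_{\mathfrak{g}\rightarrow\mathfrak{g}^\Gamma}(K_\mathbb{C}\cdot\mathfrak{p}_{-\beta}^*)\subseteq\mathcal{N}(\mathfrak{g}_\mathbb{C}^{\Gamma*}).\]
The first inclusion contradicts Lemma 2, and the second contradicts Remark 5. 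Either way we reach a contradiction, so no such $X$ exists.

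There is essentially no technical obstacle; the only point to watch is that the hypothesis $\sigma\beta=-\tau\beta\ne\pm\beta$ is invariant under replacing $\beta$ by $-\beta$, which is precisely the content of Remark 5 and which allows the Hermitian-type bifurcation of minimal orbits to be handled uniformly. Apart from this bookkeeping, the proof is a direct concatenation of the results already established, and Lemma 2 is doing all the real work.
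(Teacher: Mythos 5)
Your proof is correct and is essentially identical to the paper's own argument: both combine Remark~\ref{11}, Lemma~\ref{2}, Remark~\ref{5}, and Proposition~\ref{8}, the only cosmetic difference being that you phrase it as a contradiction while the paper argues directly via the contrapositive of Proposition~\ref{8}. Your side remark that nontriviality of $X$ forces $\mathcal{V}_{\mathfrak{g}_\mathbb{C}}(X)\neq\{0\}$, hence the absorption of a minimal orbit, is exactly the implicit content of Remark~\ref{11}.
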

\begin{proof}
From \remarkref{11}, one has either $K_\mathbb{C}\cdot\mathfrak{p}_\beta^*\subseteq\mathcal{V}_{\mathfrak{g}_\mathbb{C}}(X)$ or $K_\mathbb{C}\cdot\mathfrak{p}_{-\beta}^*\subseteq\mathcal{V}_{\mathfrak{g}_\mathbb{C}}(X)$ for any nontrivial unitarizable simple $(\mathfrak{g},K)$-module $X$. On the other hand, by \lemmaref{2} and \remarkref{5}, $\mathrm{pr}_{\mathfrak{g}\rightarrow\mathfrak{g}^\Gamma}(K_\mathbb{C}\cdot\mathfrak{p}_{\pm\beta}^*)\nsubseteq\mathcal{N}(\mathfrak{g}_\mathbb{C}^{\Gamma*})$. Therefore, $\mathrm{pr}_{\mathfrak{g}\rightarrow\mathfrak{g}^\Gamma}\mathcal{V}_{\mathfrak{g}_\mathbb{C}}(X)\nsubseteq\mathcal{N}(\mathfrak{g}_\mathbb{C}^{\Gamma*})$ for any nontrivial unitarizable simple $(\mathfrak{g},K)$-module $X$, and the conclusion follows from \propositionref{8}.
\end{proof}
\begin{corollary}\label{12}
Let $G$ be a simple Lie group, and $(G,G^\Gamma)$ a Klein four symmetric pair defined by the Klein four subgroup $\Gamma=\langle\sigma,\tau\rangle$ generated by two involutive automorphisms $\sigma$ and $\tau$. Suppose that there exists a nontrivial unitarizable simple $(\mathfrak{g},K)$-module that is discretely decomposable as a $(\mathfrak{g}^\Gamma,K^\Gamma)$-module. Suppose that
\begin{enumerate}[(1)]
\item there exists a nontrivial unitarizable simple $(\mathfrak{g},K)$-module that is discretely decomposable as a $(\mathfrak{g}^\sigma,K^\sigma)$-module;
\item there exists a nontrivial unitarizable simple $(\mathfrak{g},K)$-module that is discretely decomposable as a $(\mathfrak{g}^\tau,K^\tau)$-module;
\item there does not exist any nontrivial unitarizable simple $(\mathfrak{g},K)$-module that is discretely decomposable as a $(\mathfrak{g}^{\sigma\tau},K^{\sigma\tau})$-module.
\end{enumerate}
Then there does not exist any nontrivial unitarizable simple $(\mathfrak{g},K)$-module that is discretely decomposable as a $(\mathfrak{g}^\Gamma,K^\Gamma)$-module.
\end{corollary}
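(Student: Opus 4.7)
The plan is to reduce conditions (1), (2), (3) to statements about the maximal noncompact root $\beta$, and then invoke \theoremref{10}. The key translation is provided by \propositionref{9}: for any involution $\iota\in\Gamma$, the existence of a nontrivial unitarizable simple $(\mathfrak{g},K)$-module that is discretely decomposable as a $(\mathfrak{g}^\iota,K^\iota)$-module is equivalent to $\iota\beta\neq-\beta$. Applying this three times, hypotheses (1), (2), (3) translate respectively into $\sigma\beta\neq-\beta$, $\tau\beta\neq-\beta$, and $\sigma\tau\beta=-\beta$.

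Next I would manipulate the equality coming from (3). Since $\sigma$ and $\tau$ act linearly on $\sqrt{-1}\mathfrak{t}^*$ and are involutions, the identity $\sigma\tau\beta=-\beta$ rewrites as $\sigma\beta=-\tau\beta$ after applying $\sigma$ to both sides and using $\sigma^2=\mathrm{id}$. It then remains to rule out $\sigma\beta=\pm\beta$: the case $\sigma\beta=-\beta$ is excluded directly by (1), while $\sigma\beta=\beta$ would force $\tau\beta=-\sigma\beta=-\beta$, contradicting (2).

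At this point the hypothesis $\sigma\beta=-\tau\beta\neq\pm\beta$ of \theoremref{10} is verified, and the theorem immediately yields the desired nonexistence statement. The argument is essentially a direct repackaging of \theoremref{10} through \propositionref{9}, so I do not anticipate any serious obstacle; the only point requiring care is the symmetric handling of both signs when verifying $\sigma\beta\neq\pm\beta$, which is achieved by using (1) and (2) separately. One could equivalently cite \remarkref{5} to emphasize that swapping the roles of $\sigma$ and $\tau$ (or of $\beta$ and $-\beta$) does not change the analysis, but this is not logically needed.
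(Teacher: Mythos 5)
Your proposal is correct and takes essentially the same route as the paper: both translate hypotheses (1)--(3) via \propositionref{9} into $\sigma\beta\neq-\beta$, $\tau\beta\neq-\beta$, and $\sigma\tau\beta=-\beta$, deduce $\sigma\beta=-\tau\beta\neq\pm\beta$, and conclude by \theoremref{10}. The only difference is cosmetic: you spell out the two sign-exclusion cases ($\sigma\beta=-\beta$ against (1), $\sigma\beta=\beta$ forcing $\tau\beta=-\beta$ against (2)) that the paper's proof leaves implicit in a single sentence.
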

\begin{proof}
The third condition implies that $\sigma\tau\beta=-\beta$ by \propositionref{9}, and hence $\sigma\beta=-\tau\beta$. Again by \propositionref{9}, the first condition and the second condition imply that $\sigma\beta=-\tau\beta\neq\pm\beta$. The conclusion follows from \theoremref{10}.
\end{proof}
\section{Applications to $\mathrm{E}_{6(-14)}$}
\subsection{Problem and strategy}
Let $(G,G^\Gamma)$ be a Klein four symmetric pair. Retain all the settings as previous sections. Consider the following problem.
\begin{problem}\label{14}
Classify all the Klein four symmetric pairs $(G,G^\Gamma)$ such that there exists at least one nontrivial unitarizable simple $(\mathfrak{g},K)$-module $X$ satisfying two conditions:
\begin{enumerate}[$\bullet$]
\item $X$ is discretely decomposable as a $(\mathfrak{g}^\Gamma,K^\Gamma)$-module;
\item $X$ is discretely decomposable as a $(\mathfrak{g}^\sigma,K^\sigma)$-module for some nonidentity element $\sigma\in\Gamma$.
\end{enumerate}
\end{problem}
\begin{remark}\label{15}
\problemref{14} does make sense. In fact, if $X$ is discretely decomposable as a $(\mathfrak{g}^\Gamma,K^\Gamma)$-module and $\sigma\in\Gamma$, it is not known whether $X$ is discretely decomposable as a $(\mathfrak{g}^\sigma,K^\sigma)$-module. For example, let $G=\mathrm{SL}(n,\mathbb{C})$. Suppose that $\Gamma=\langle\sigma,\tau\rangle$ is the Klein four subgroup generated $\sigma$ and $\tau$, where $\sigma:A\mapsto\overline{A}$ is the natural complex conjugation and $\tau:A\mapsto (A^{-1})^T$ is the composition of inverse and transposition. Then $G^\Gamma=\mathrm{SO}(n)$ is compact and hence any unitarizable simple $(\mathfrak{g},K)$-module is discretely decomposable as a $(\mathfrak{g}^\Gamma,K^\Gamma)$-module, but $G^\sigma=\mathrm{SL}(n,\mathbb{R})$ is a split real form of $G$ and by \cite[Theorem 8.1]{Ko4}, there is no unitarizable simple $(\mathfrak{g},K)$-module discretely decomposable as a $(\mathfrak{g}^\sigma,K^\sigma)$-module.
\end{remark}
\begin{remark}\label{16}
It is obvious that if $G$ is a simple Lie group of Hermitian type and $(G,G^\Gamma)$ is a Klein four symmetric pair of holomorphic type, then any highest / lowest weight simple $(\mathfrak{g},K)$-module satisfies the requirements in \problemref{14}. Moreover, for exceptional Lie groups of Hermitian type, \problemref{14} was discussed under the assumption that $\sigma$ is an involutive automorphism of anti-holomorphic type in \cite{H3}. In this section, the author considers the case when $(G,G^\Gamma)$ is a Klein four symmetric pair of non-holomorphic type and $\sigma$ is an involutive automorphism of holomorphic type for $G=\mathrm{E}_{6(-14)}$.
\end{remark}
The following result is useful to the first step for the classification.
\begin{proposition}\label{13}
Let $X$ be a unitarizable simple $(\mathfrak{g},K)$-module, and let $\sigma\in\Gamma$. If $X$ is discretely decomposable as a $(\mathfrak{g}^\Gamma,K^\Gamma)$-module and $X$ is also discretely decomposable as a $(\mathfrak{g}^\sigma,K^\sigma)$-module, then there exists a unitarizable simple $(\mathfrak{g}^\sigma,K^\sigma)$-module that is discretely decomposable as a $(\mathfrak{g}^\Gamma,K^\Gamma)$-module.
\end{proposition}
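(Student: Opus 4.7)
The strategy is to apply the equivalence (1)$\Leftrightarrow$(3) of \propositionref{3} twice: first to extract a unitarizable simple $(\mathfrak{g}^\sigma,K^\sigma)$-submodule $Y$ of $X$, and then to verify that $Y$ itself is an algebraic direct sum of simple $(\mathfrak{g}^\Gamma,K^\Gamma)$-modules. The key structural input is the chain $(\mathfrak{g}^\Gamma,K^\Gamma)\subseteq(\mathfrak{g}^\sigma,K^\sigma)\subseteq(\mathfrak{g},K)$, which holds because every element of $\Gamma$ fixes $G^\Gamma$ pointwise; consequently, any $(\mathfrak{g}^\sigma,K^\sigma)$-submodule of $X$ is automatically a $(\mathfrak{g}^\Gamma,K^\Gamma)$-submodule.

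I would carry out three steps. First, since $X$ is discretely decomposable as a $(\mathfrak{g}^\sigma,K^\sigma)$-module, \propositionref{3}(3) yields an algebraic direct sum decomposition $X\cong\bigoplus_i Y_i$ into simple $(\mathfrak{g}^\sigma,K^\sigma)$-modules, and I set $Y:=Y_1$. Second, I would observe that any positive definite $(\mathfrak{g},K)$-invariant Hermitian form on $X$ is automatically $(\mathfrak{g}^\sigma,K^\sigma)$-invariant, and its restriction to the submodule $Y$ remains positive definite; hence $Y$ is a unitarizable simple $(\mathfrak{g}^\sigma,K^\sigma)$-module, to which \propositionref{3} may be applied with respect to the inclusion $(\mathfrak{g}^\Gamma,K^\Gamma)\subseteq(\mathfrak{g}^\sigma,K^\sigma)$. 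Third, since $X$ is also discretely decomposable as a $(\mathfrak{g}^\Gamma,K^\Gamma)$-module, \propositionref{3}(3) gives that $X$ is semisimple in the category of $(\mathfrak{g}^\Gamma,K^\Gamma)$-modules. Because $Y\subseteq X$ is a $(\mathfrak{g}^\Gamma,K^\Gamma)$-submodule of a semisimple module, $Y$ is itself semisimple as a $(\mathfrak{g}^\Gamma,K^\Gamma)$-module, so condition (3) of \propositionref{3} holds for $Y$ relative to $(\mathfrak{g}^\Gamma,K^\Gamma)\subseteq(\mathfrak{g}^\sigma,K^\sigma)$. Applying the equivalence in the reverse direction, $Y$ is discretely decomposable as a $(\mathfrak{g}^\Gamma,K^\Gamma)$-module, which is exactly the desired statement.

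I do not expect any genuine obstacle: once \propositionref{3} is in hand, the proof is essentially a formal consequence combined with the elementary fact that every submodule of a semisimple module is semisimple. The only delicate points are confirming that the restricted Hermitian form genuinely provides a unitary structure on $Y$ so that \propositionref{3} is legally applicable in the last step, and tracking the chain of inclusions between $(\mathfrak{g}^\Gamma,K^\Gamma)$, $(\mathfrak{g}^\sigma,K^\sigma)$, and $(\mathfrak{g},K)$ carefully so that the semisimplicity of $X$ as a $(\mathfrak{g}^\Gamma,K^\Gamma)$-module transfers to its $(\mathfrak{g}^\sigma,K^\sigma)$-submodule $Y$.
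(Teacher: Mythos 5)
Your proof is correct. Keep in mind that the paper gives no argument of its own here---its ``proof'' is a citation of \cite[Lemma 9]{H3}---and the argument behind that lemma is the same double application of \propositionref{3} along the chain $(\mathfrak{g}^\Gamma,K^\Gamma)\subseteq(\mathfrak{g}^\sigma,K^\sigma)\subseteq(\mathfrak{g},K)$ that you use: decompose $X$ as an algebraic direct sum $\bigoplus_i Y_i$ of simple $(\mathfrak{g}^\sigma,K^\sigma)$-modules, note that each summand is unitarizable, and then use the discrete decomposability of $X$ over $(\mathfrak{g}^\Gamma,K^\Gamma)$ to show that some summand is discretely decomposable. Where you diverge is only in the last step: you invoke criterion (3) of \propositionref{3} together with the module-theoretic fact that a submodule of a semisimple module is semisimple, whereas the standard route (and the one in the cited lemma) uses criterion (2): a simple $(\mathfrak{g}^\Gamma,K^\Gamma)$-module $Z$ admits a nonzero map $Z\to X=\bigoplus_i Y_i$, and composing with the projection onto a suitable summand gives $\mathrm{Hom}_{(\mathfrak{g}^\Gamma,K^\Gamma)}(Z,Y_i)\neq\{0\}$ for some $i$, whence $Y_i$ is discretely decomposable. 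The Hom-projection variant is slightly leaner, since it needs no semisimple-submodule lemma; your variant is equally valid. To be fully rigorous you should make two points explicit: (i) $Y$ must be realized as an honest $(\mathfrak{g}^\sigma,K^\sigma)$-submodule of $X$ (e.g.\ via implication (1)$\Rightarrow$(2) of \propositionref{3}), not merely an abstract summand, so that the invariant Hermitian form on $X$ restricts to it; and (ii) the second application of \propositionref{3} takes the ambient pair to be $(\mathfrak{g}^\sigma,K^\sigma)$, which is reductive but not simple, so one needs Kobayashi's Lemmas 1.3 and 1.5 of \cite{Ko4} in their reductive-pair generality---which is indeed the generality in which the paper itself uses them (cf.\ \propositionref{25}).
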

\begin{proof}
See \cite[Lemma 9]{H3}.
\end{proof}
The author will find out all the Klein four symmetric pairs $(G,G^\Gamma)$ for $G=\mathrm{E}_{6(-14)}$ as required in \problemref{14}. Since the author excludes the cases when $G^\Gamma$ is compact, $\Gamma$ is not supposed to contain any Cartan involution of $G$. Moreover, due to the work in \cite{H3}, the involutive automorphism $\sigma$ in the second requirement of \problemref{14} is supposed to be of holomorphic type, and hence any highest / lowest weight simple $(\mathfrak{g},K)$-module is discretely decomposable as a $(\mathfrak{g}^\sigma,K^\sigma)$-module. Also, because of $(G,G^\Gamma)$ is assumed to be of non-holomorphic type, there is an involutive automorphism $\tau\in\Gamma$, which defines a symmetric pair of anti-holomorphic type. On the other hand, it is known from \cite{HM} that, for any nontrivial irreducible unitary representation of $G$, its restriction to its reductive subgroup contains no trivial subrepresentations. Hence in \propositionref{13}, if $X$ is supposed to be nontrivial, all the direct summands are nontrivial.

Based on the discussion as above, by the conditions in \problemref{14} and \propositionref{13}, the author needs to find reductive Lie group triples $(G,G^\sigma,G^\Gamma)$ such that
\begin{enumerate}[$\bullet$]
\item $G$ is either $\mathrm{E}_{6(-14)}$;
\item $G^\Gamma$ is noncompact;
\item $(G,G^\sigma)$ is a symmetric pair of holomorphic type;
\item $(G^\sigma,G^\Gamma)$ is a symmetric pair, such that there exists at least one nontrivial unitarizable simple $(\mathfrak{g}^\sigma,K^\sigma)$-module which is discretely decomposable as a $(\mathfrak{g}^\Gamma,K^\Gamma)$-module;
\item $G^\Gamma$ does not contain the center of $K$.
\end{enumerate}
According to \cite[Table C.2]{KO1}, there are totally four symmetric pairs $(G,G^\sigma)$ of holomorphic type with $G^\sigma$ noncompact, listed in the Lie algebra level $(\mathfrak{g},\mathfrak{g}^\sigma)$ as follows.
\begin{itemize}
\item $(\mathfrak{e}_{6(-14)},\mathfrak{so}(8,2)\oplus\mathfrak{so}(2))$
\item $(\mathfrak{e}_{6(-14)},\mathfrak{su}(4,2)\oplus\mathfrak{su}(2))$
\item $(\mathfrak{e}_{6(-14)},\mathfrak{so}^*(10)\oplus\mathfrak{so}(2))$
\item $(\mathfrak{e}_{6(-14)},\mathfrak{su}(5,1)\oplus\mathfrak{sl}(2,\mathbb{R}))$
\end{itemize}
For the time being, the following results ought to be well known and easy to prove.
\begin{proposition}\label{23}
Let $\mathfrak{g}=\mathfrak{g}_1\oplus\mathfrak{g}_2$ be a semisimple Lie algebra that is a direct sum of two non-isomorphic simple ideals $\mathfrak{g}_1$ and $\mathfrak{g}_2$. If $\varphi$ is a Lie algebra automorphism of $\mathfrak{g}$, then $\varphi\mathfrak{g}_1=\mathfrak{g}_1$ and $\varphi\mathfrak{g}_2=\mathfrak{g}_2$.
\end{proposition}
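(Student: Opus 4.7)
The plan is to use the standard fact that the simple ideals of a semisimple Lie algebra are precisely the minimal nonzero ideals, and that any automorphism must permute these. So I would first show that $\mathfrak{g}_1$ and $\mathfrak{g}_2$ are the only simple ideals of $\mathfrak{g}$, and then observe that $\varphi$ sends each to another simple ideal; the non-isomorphism hypothesis then forces $\varphi$ to fix each $\mathfrak{g}_i$ setwise.

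In more detail, suppose $\mathfrak{a}$ is any simple ideal of $\mathfrak{g}$. Then $[\mathfrak{a},\mathfrak{g}_i]\subseteq \mathfrak{a}\cap\mathfrak{g}_i$ is an ideal of the simple Lie algebra $\mathfrak{a}$, so it is either $\{0\}$ or $\mathfrak{a}$. Since $\mathfrak{a}$ is nonabelian and $\mathfrak{g}=\mathfrak{g}_1\oplus\mathfrak{g}_2$, we cannot have $[\mathfrak{a},\mathfrak{g}_1]=[\mathfrak{a},\mathfrak{g}_2]=\{0\}$; hence $\mathfrak{a}\subseteq\mathfrak{g}_i$ for some $i$, and since $\mathfrak{g}_i$ is itself simple and $\mathfrak{a}\neq\{0\}$, one concludes $\mathfrak{a}=\mathfrak{g}_i$. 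Thus the only simple ideals of $\mathfrak{g}$ are $\mathfrak{g}_1$ and $\mathfrak{g}_2$.

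Next, for the automorphism $\varphi$, the image $\varphi(\mathfrak{g}_1)$ is an ideal of $\varphi(\mathfrak{g})=\mathfrak{g}$, and the restriction $\varphi|_{\mathfrak{g}_1}\colon\mathfrak{g}_1\to\varphi(\mathfrak{g}_1)$ is a Lie algebra isomorphism, so $\varphi(\mathfrak{g}_1)$ is a simple ideal of $\mathfrak{g}$ isomorphic to $\mathfrak{g}_1$. By the previous paragraph, $\varphi(\mathfrak{g}_1)\in\{\mathfrak{g}_1,\mathfrak{g}_2\}$, and since $\mathfrak{g}_1\not\cong\mathfrak{g}_2$, we must have $\varphi(\mathfrak{g}_1)=\mathfrak{g}_1$. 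The same argument applied to $\mathfrak{g}_2$ gives $\varphi(\mathfrak{g}_2)=\mathfrak{g}_2$.

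There is no real obstacle here; the statement is essentially a bookkeeping consequence of the classification of ideals in a semisimple Lie algebra. The only subtle point is the elementary verification that a simple ideal of $\mathfrak{g}_1\oplus\mathfrak{g}_2$ is contained in one of the summands, and this is handled by the bracket argument above.
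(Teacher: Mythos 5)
Your proof is correct, and while it lives in the same elementary, ideal-theoretic world as the paper's, it is organized around a different intermediate step. The paper argues by contradiction directly on $\varphi\mathfrak{g}_1$: it observes that $\varphi\mathfrak{g}_1\cap\mathfrak{g}_1$ is an ideal of $\mathfrak{g}$, hence $\{0\}$ or $\mathfrak{g}_1$; in the $\{0\}$ case $[\varphi\mathfrak{g}_1,\mathfrak{g}_1]=\{0\}$, so $\varphi\mathfrak{g}_1$ lies in the centralizer of $\mathfrak{g}_1$, which the paper identifies (without proof) as $\mathfrak{g}_2$, and this case is then excluded --- the paper says it ``contradicts the simplicity of $\mathfrak{g}_2$,'' though strictly it is the non-isomorphism hypothesis that rules out the resulting equality $\varphi\mathfrak{g}_1=\mathfrak{g}_2$. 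You instead first prove the reusable fact that $\mathfrak{g}_1$ and $\mathfrak{g}_2$ are the \emph{only} simple ideals of $\mathfrak{g}$, via the dichotomy $[\mathfrak{a},\mathfrak{g}_i]\in\bigl\{\{0\},\mathfrak{a}\bigr\}$ and the nonabelianness of a simple ideal $\mathfrak{a}$, and then conclude by the permutation argument plus $\mathfrak{g}_1\not\cong\mathfrak{g}_2$. What your route buys: it avoids invoking the centralizer identification, it makes the precise role of the non-isomorphism hypothesis explicit (which the paper's last sentence blurs), and it generalizes verbatim to a direct sum of any finite number of pairwise non-isomorphic simple ideals. What the paper's route buys: it is marginally shorter for the two-factor case at hand. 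Both proofs rest on the same two pillars --- an ideal of a simple Lie algebra is zero or everything, and the image $\varphi\mathfrak{g}_1$ is a simple ideal isomorphic to $\mathfrak{g}_1$ --- so there is no gap in either.
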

\begin{proof}
It is obvious that $\varphi\mathfrak{g}_1\cong\mathfrak{g}_1$ is an simple ideal of $\mathfrak{g}$, and hence $\varphi\mathfrak{g}_1\cap\mathfrak{g}_1$ is an ideal of $\mathfrak{g}$, which is either $\mathfrak{g}_1$ or $\{0\}$. Assume $\varphi\mathfrak{g}_1\cap\mathfrak{g}_1=\{0\}$, and then $\varphi\mathfrak{g}_1$ centralizes $\mathfrak{g}_1$. But the centralizer of $\mathfrak{g}_1$ in $\mathfrak{g}$ is exactly $\mathfrak{g}_2$, so $\varphi\mathfrak{g}_1\subseteq\mathfrak{g}_2$, which contradicts to the simplicity of $\mathfrak{g}_2$. Similarly, $\varphi\mathfrak{g}_2=\mathfrak{g}_2$.
\end{proof}
\begin{proposition}\label{24}
Let $\mathfrak{g}=Z(\mathfrak{g})\oplus[\mathfrak{g},\mathfrak{g}]$ be a reductive Lie algebra with the center $Z(\mathfrak{g})$. If $\varphi$ is a Lie algebra automorphism of $\mathfrak{g}$, then $\varphi Z(\mathfrak{g})=Z(\mathfrak{g})$ and $\varphi[\mathfrak{g},\mathfrak{g}]=[\mathfrak{g},\mathfrak{g}]$.
\end{proposition}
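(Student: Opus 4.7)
The plan is to verify both equalities by showing that each of $Z(\mathfrak{g})$ and $[\mathfrak{g},\mathfrak{g}]$ is a characteristic ideal, i.e.\ preserved by every Lie algebra automorphism. The direct sum decomposition $\mathfrak{g}=Z(\mathfrak{g})\oplus[\mathfrak{g},\mathfrak{g}]$ itself will play no role beyond motivating the statement; the two invariance claims are logically independent and proved separately.

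First I would handle $\varphi Z(\mathfrak{g})=Z(\mathfrak{g})$. Given $z\in Z(\mathfrak{g})$ and any $x\in\mathfrak{g}$, use surjectivity of $\varphi$ to write $x=\varphi(y)$ for some $y\in\mathfrak{g}$, and compute $[\varphi(z),x]=[\varphi(z),\varphi(y)]=\varphi([z,y])=\varphi(0)=0$. This shows $\varphi(z)\in Z(\mathfrak{g})$, so $\varphi Z(\mathfrak{g})\subseteq Z(\mathfrak{g})$. Applying the identical argument to the automorphism $\varphi^{-1}$ gives the reverse inclusion, hence equality.

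Next I would handle $\varphi[\mathfrak{g},\mathfrak{g}]=[\mathfrak{g},\mathfrak{g}]$. Since $\varphi$ preserves the bracket, for any commutator $[x,y]$ we have $\varphi([x,y])=[\varphi(x),\varphi(y)]\in[\mathfrak{g},\mathfrak{g}]$; extending linearly to all of $[\mathfrak{g},\mathfrak{g}]$ yields $\varphi[\mathfrak{g},\mathfrak{g}]\subseteq[\mathfrak{g},\mathfrak{g}]$. Running the same observation for $\varphi^{-1}$ supplies the opposite inclusion.

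There is no substantive obstacle: both facts are immediate from the definitions of center and derived subalgebra together with the fact that $\varphi$ is a bracket-preserving bijection. The only point of hygiene is to invoke $\varphi^{-1}$ (rather than a dimension argument) to secure the equalities, which also makes the proposition valid without any finite-dimensionality assumption.
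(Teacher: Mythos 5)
Your proof is correct and takes essentially the same approach as the paper, which likewise notes that any automorphism preserves the center and computes $\varphi[\mathfrak{g},\mathfrak{g}]=[\varphi\mathfrak{g},\varphi\mathfrak{g}]=[\mathfrak{g},\mathfrak{g}]$. The only cosmetic difference is that the paper gets the second equality in one step from the surjectivity $\varphi\mathfrak{g}=\mathfrak{g}$, where you argue by two inclusions using $\varphi$ and $\varphi^{-1}$.
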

\begin{proof}
Any automorphism fixes the center, and then $\varphi Z(\mathfrak{g})=Z(\mathfrak{g})$. Moreover, $\varphi[\mathfrak{g},\mathfrak{g}]=[\varphi\mathfrak{g},\varphi\mathfrak{g}]=[\mathfrak{g},\mathfrak{g}]$.
\end{proof}
Suppose that $\mathfrak{g}_1$ and $\mathfrak{g}_2$ are two simple Lie algebras. It is well known that, if $X_i$ is a simple $(\mathfrak{g}_i,K_i)$-module for $i=1,2$, then $X_1\boxtimes X_2$ is a simple $(\mathfrak{g}_1\oplus\mathfrak{g}_2,K_1\times K_2)$-module. Conversely, any simple $(\mathfrak{g}_1\oplus\mathfrak{g}_2,K_1\times K_2)$-module is in the form of $X_1\boxtimes X_2$ where $X_i$ is a simple $(\mathfrak{g}_i,K_i)$-module for $i=1,2$.
\begin{proposition}\label{25}
Let $\mathfrak{g}_i$ for $i=1,2$ be two simple Lie algebras, and let $\mathfrak{g}'_i$ be a reductive subalgebra of $\mathfrak{g}_i$. Suppose that $X_i$ is a unitarizable simple $(\mathfrak{g}_i,K_i)$-module. Then $X_1\boxtimes X_2$ is discretely decomposable as a $(\mathfrak{g}'_1\oplus\mathfrak{g}'_2,K'_1\times K'_2)$-module if and only if $X_i$ is discretely decomposable as a $(\mathfrak{g}'_i,K'_i)$-module for $i=1,2$.
\end{proposition}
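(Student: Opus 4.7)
The plan is to handle the two implications separately, using \propositionref{3} as the main bridge between discrete decomposability, existence of a simple submodule, and being an algebraic direct sum of simple submodules; combined with the fact (recalled just before the statement) that simple $(\mathfrak{g}_1\oplus\mathfrak{g}_2,K_1\times K_2)$-modules are exactly outer tensor products $Y_1\boxtimes Y_2$ of simple factor modules. Note that $X_1\boxtimes X_2$ is itself a unitarizable simple $(\mathfrak{g}_1\oplus\mathfrak{g}_2,K_1\times K_2)$-module, so \propositionref{3} applies to it, and the restriction subgroup $(\mathfrak{g}'_1\oplus\mathfrak{g}'_2,K'_1\times K'_2)$ is reductive, so \definitionref{1} applies in this product setting.

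For the \emph{if} direction, I would use \propositionref{3}(3) on each factor to write $X_i=\bigoplus_{j}Y_{i,j}$ as an algebraic direct sum of simple $(\mathfrak{g}'_i,K'_i)$-modules. Then I would take the obvious outer tensor product decomposition
\[
X_1\boxtimes X_2\;=\;\bigoplus_{j,k}Y_{1,j}\boxtimes Y_{2,k},
\]
observe that each summand is a simple $(\mathfrak{g}'_1\oplus\mathfrak{g}'_2,K'_1\times K'_2)$-module, and apply the converse direction of \propositionref{3}(3) to conclude discrete decomposability of $X_1\boxtimes X_2$.

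For the \emph{only if} direction I would use \propositionref{3}(2): pick a simple $(\mathfrak{g}'_1\oplus\mathfrak{g}'_2,K'_1\times K'_2)$-submodule $Y_1\boxtimes Y_2$ of $X_1\boxtimes X_2$ via a nonzero map $\varphi\colon Y_1\boxtimes Y_2\to X_1\boxtimes X_2$, and then extract nonzero $(\mathfrak{g}'_i,K'_i)$-homomorphisms $Y_i\to X_i$ so that \propositionref{3}(2) in the reverse direction gives discrete decomposability of each $X_i$. The extraction goes as follows. Since $\varphi\neq 0$, choose $y_2\in Y_2$ such that the map $\varphi_{y_2}\colon Y_1\to X_1\boxtimes X_2$, $y_1\mapsto\varphi(y_1\otimes y_2)$, is nonzero. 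This $\varphi_{y_2}$ is $(\mathfrak{g}'_1,K'_1)$-equivariant, since $\mathfrak{g}'_1$ and $K'_1$ act only on the $X_1$-factor of the target. Viewing $X_1\boxtimes X_2$ as $(\mathfrak{g}'_1,K'_1)$-module through a vector space decomposition $X_2=\bigoplus_\alpha\mathbb{C}e_\alpha$ yields $X_1\boxtimes X_2\cong\bigoplus_\alpha X_1$, and the simplicity of $Y_1$ forces $\varphi_{y_2}$ to be injective, hence its composition with the projection onto some summand $X_1$ is a nonzero $(\mathfrak{g}'_1,K'_1)$-map $Y_1\to X_1$. The same argument, with the roles of $1$ and $2$ swapped, produces a nonzero $(\mathfrak{g}'_2,K'_2)$-map $Y_2\to X_2$.

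The main obstacle I anticipate is the bookkeeping in the last paragraph: the identification of $X_1\boxtimes X_2$ as a $(\mathfrak{g}'_1,K'_1)$-module with a (possibly infinite) direct sum of copies of $X_1$ is harmless algebraically, but one must verify that some projection onto a single copy is nontrivial on $\varphi_{y_2}(Y_1)$. This follows because any nonzero vector in $X_1\otimes X_2$ has a finite expansion in the chosen basis, and a nonzero $(\mathfrak{g}'_1,K'_1)$-image of simple $Y_1$ must project nontrivially onto at least one coordinate; once this is in place both directions fit together and \propositionref{3} delivers the equivalence directly.
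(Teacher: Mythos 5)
Your proposal is correct and follows essentially the same route as the paper's own proof: both directions are bridged through \propositionref{3} together with the fact that simple modules of a product pair are outer tensor products of simples, and your ``only if'' argument (fix a vector in the second factor, view $X_1\otimes X_2$ as a direct sum of copies of $X_1$ indexed by a basis of $X_2$, and extract a nonzero component map $Y_1\to X_1$) is exactly the paper's argument. The only cosmetic difference is in the ``if'' direction, where you invoke the full direct-sum characterization (\propositionref{3}(3)) and tensor the two decompositions, while the paper simply tensors two nonzero homomorphisms $f_1\otimes f_2$ and invokes \propositionref{3}(2); these are interchangeable.
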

\begin{proof}
If $X_i$ is discretely decomposable as a $(\mathfrak{g}'_i,K'_i)$-module for $i=1,2$, by \propositionref{3}, there exists a simple $(\mathfrak{g}'_i,K'_i)$-module $X'_i$ such that $\mathrm{Hom}_{(\mathfrak{g}'_i,K'_i)}(X'_i,X_i)\neq\{0\}$ for $i=1,2$. Take a nonzero element $f_i\in\mathrm{Hom}_{(\mathfrak{g}'_i,K'_i)}(X'_i,X_i)$ for $i=1,2$, and $f_1\otimes f_2$ is a nonzero element in $\mathrm{Hom}_{(\mathfrak{g}'_1\oplus\mathfrak{g}'_2,K'_1\times K'_2)}(X'_1\boxtimes X'_2,X_1\boxtimes X_2)$. Again by \propositionref{3}, $X_1\boxtimes X_2$ is discretely decomposable as a $(\mathfrak{g}'_1\oplus\mathfrak{g}'_2,K'_1\times K'_2)$-module. Conversely, suppose that $X_1\boxtimes X_2$ is discretely decomposable as a $(\mathfrak{g}'_1\oplus\mathfrak{g}'_2,K'_1\times K'_2)$-module. By \propositionref{3}, there exists a simple $(\mathfrak{g}'_i,K'_i)$-module $X'_i$ for $i=1,2$ such that $\mathrm{Hom}_{(\mathfrak{g}'_1\oplus\mathfrak{g}'_2,K'_1\times K'_2)}(X'_1\boxtimes X'_2,X_1\boxtimes X_2)\neq\{0\}$. Take a nonzero element $v'\in X'_2$. Therefore, as a $(\mathfrak{g}'_1,K'_1)$-module, one has a nonzero injection given by $X'_1\cong X'_1\otimes v'\hookrightarrow X'_1\otimes X'_2\hookrightarrow X_1\otimes X_2\cong\bigoplus\limits_{v\in X_2}X_1\otimes v\cong\bigoplus\limits_{v\in X_2}X_1\hookrightarrow\prod\limits_{v\in X_2}X_1$. It follows that $\prod\limits_{v\in X_2}\mathrm{Hom}_{(\mathfrak{g}'_1,K'_1)}(X'_1,X_1)\cong \mathrm{Hom}_{(\mathfrak{g}'_1,K'_1)}(X'_1,\prod\limits_{v\in X_2}X_1)\neq\{0\}$, and hence $\mathrm{Hom}_{(\mathfrak{g}'_1,K'_1)}(X'_1,X_1)\neq\{0\}$. This shows that $X_1$ is discretely decomposable as a $(\mathfrak{g}'_1,K'_1)$-module by \propositionref{3}. Similarly, $X_2$ is discretely decomposable as a $(\mathfrak{g}'_2,K'_2)$-module.
\end{proof}
Now the author excludes some of the four symmetric pairs $(\mathfrak{g},\mathfrak{g}^\sigma)$ for $\mathfrak{g}=\mathfrak{e}_{6(-14)}$ listed above. Firstly, let $(\mathfrak{g},\mathfrak{g}^\sigma)=(\mathfrak{e}_{6(-14)},\mathfrak{so}^*(10)\oplus\mathfrak{so}(2))$. If $(\mathfrak{g},\mathfrak{g}^\Gamma)$ is a Klein four symmetric pair, then $\mathfrak{g}^\Gamma=(\mathfrak{g}^\sigma)^\tau=\mathfrak{so}^*(10)^\tau\oplus\mathfrak{so}(2)^\tau$ for some $\tau\in\Gamma$ by \propositionref{24}. If there is a simple $(\mathfrak{g}^\sigma,K^\sigma)$ module that is discretely decomposable as a $(\mathfrak{g}^\Gamma,K^\Gamma)$-module, then there exists a simple $(\mathfrak{so}^*(10),\mathrm{U}(5))$-module that is discretely decomposable as a $(\mathfrak{so}^*(10)^\tau,\mathrm{U}(5)^\tau)$-module by \propositionref{25}. According to \cite[Table C.2]{KO1} and \cite[Theorem 5.2 \& Table 1]{KO2}, such $(\mathfrak{so}^*(10),\mathfrak{so}^*(10)^\tau)$ must be a symmetric pair of holomorphic type; i.e., $\mathfrak{so}^*(10)^\tau$ contains the center of $\mathfrak{u}(5)$, the maximal compact subalgebra of $\mathfrak{so}^*(10)$. On the other hand, since $\mathfrak{g}^\sigma$ is not compact, the center of $\mathfrak{k}$ cannot centralize the whole $\mathfrak{g}^\sigma$. It follows that the center of $\mathfrak{k}$ does not come from $\mathfrak{so}(2)$, and is equal to the center of $\mathfrak{u}(5)$. Thus, $\mathfrak{g}^\Gamma$ contains the center of $\mathfrak{k}$, which contradicts to the requirement that $G^\Gamma$ does not contain the center of $K$. Hence, there is no need to consider $(\mathfrak{g},\mathfrak{g}^\sigma)=(\mathfrak{e}_{6(-14)},\mathfrak{so}^*(10)\oplus\mathfrak{so}(2))$.

Secondly, let $(\mathfrak{g},\mathfrak{g}^\sigma)=(\mathfrak{g},\mathfrak{g}_1\oplus\mathfrak{g}_2)=(\mathfrak{e}_{6(-14)},\mathfrak{su}(5,1)\oplus \mathfrak{sl}(2,\mathbb{R}))$. If $(\mathfrak{g},\mathfrak{g}^\Gamma)$ is a Klein four symmetric pair, by \propositionref{23}, $\mathfrak{g}^\Gamma=(\mathfrak{g}^\sigma)^\tau=\mathfrak{g}_1^\tau\oplus\mathfrak{g}_2^\tau$ for some $\tau\in\Gamma$. If $X_1\boxtimes X_2$ is a simple $(\mathfrak{g}^\sigma,K^\sigma)$ module that is discretely decomposable as a $(\mathfrak{g}^\Gamma,K^\Gamma)$-module, $X_i$ is discretely decomposable as a $(\mathfrak{g}_i^\tau,K_i^\tau)$-module by \propositionref{25}. According to \cite[Table C.2]{KO1} and \cite[Theorem 5.2 \& Table 1]{KO2}, for $\mathfrak{g}_1=\mathfrak{su}(5,1)$ or $\mathfrak{g}_2=\mathfrak{sl}(2,\mathbb{R})$, if there exists a simple $(\mathfrak{g}_i,K_i)$-module that is discretely decomposable as a $(\mathfrak{g}_i^\tau,K_i^\tau)$-module, $(\mathfrak{g}_i,\mathfrak{g}_i^\tau)$ must be a symmetric pair of holomorphic type. Hence, $\mathfrak{g}^\Gamma$ contains the center of $\mathfrak{k}^\sigma$. Since $(\mathfrak{g},\mathfrak{g}^\sigma)$ is supposed to be of holomorphic type, the center of $\mathfrak{k}^\sigma$ contains that of $\mathfrak{k}$. But this does not satisfy the requirement that $G^\Gamma$ does not contain the center of $K$. Hence, there is no need to consider $(\mathfrak{g},\mathfrak{g}^\sigma)=(\mathfrak{e}_{6(-14)},\mathfrak{su}(5,1)\oplus\mathfrak{sl}(2,\mathbb{R}))$.

Thus, there remain two symmetric pairs of holomorphic type $(\mathfrak{g},\mathfrak{g}^\sigma)$.
\begin{itemize}
\item $(\mathfrak{e}_{6(-14)},\mathfrak{so}(8,2)\oplus\mathfrak{so}(2))$
\item $(\mathfrak{e}_{6(-14)},\mathfrak{su}(4,2)\oplus\mathfrak{su}(2))$
\end{itemize}
By the similar argument as in the case of $(\mathfrak{e}_{6(-14)},\mathfrak{so}^*(10)\oplus\mathfrak{so}(2))$, the center of the maximal compact subalgebra of $\mathfrak{e}_{6(-14)}$ is contained in $\mathfrak{so}(8,2)$ and $\mathfrak{su}(4,2)$ for $(\mathfrak{g},\mathfrak{g}^\sigma)=(\mathfrak{e}_{6(-14)},\mathfrak{so}(8,2)\oplus\mathfrak{so}(2))$ and $(\mathfrak{e}_{6(-14)},\mathfrak{su}(4,2)\oplus\mathfrak{su}(2))$ respectively. Notice that $(\mathfrak{g}^\sigma,\mathfrak{g}^\Gamma)$ is a symmetric pair and $\mathfrak{g}^\Gamma$ does not contain the center of $\mathfrak{k}$. By \propositionref{23}, \propositionref{24}, \cite[Table C.2]{KO1}, and \cite[Table 1]{KO2}, the possible triples $(\mathfrak{g},\mathfrak{g}^\sigma,\mathfrak{g}^\Gamma)$ are
\begin{eqnarray*}
\begin{array}{lllllll}
&&(\mathfrak{e}_{6(-14)},\mathfrak{so}(8,2)\oplus\mathfrak{so}(2),\mathfrak{so}(8,1)\oplus\mathfrak{so}(2)), &&(\mathfrak{e}_{6(-14)},\mathfrak{so}(8,2)\oplus\mathfrak{so}(2),\mathfrak{so}(8,1));\\
&&(\mathfrak{e}_{6(-14)},\mathfrak{su}(4,2)\oplus\mathfrak{su}(2),\mathfrak{sp}(2,1)\oplus\mathfrak{su}(2)), &&(\mathfrak{e}_{6(-14)},\mathfrak{su}(4,2)\oplus\mathfrak{su}(2),\mathfrak{sp}(2,1)\oplus\mathfrak{so}(2)).
\end{array}
\end{eqnarray*}
The author does not need to consider $(\mathfrak{e}_{6(-14)},\mathfrak{so}(8,2)\oplus\mathfrak{so}(2),\mathfrak{so}(8,1))$ because of the following result.
\begin{proposition}\label{31}
For $(\mathfrak{g},\mathfrak{g}^\sigma,\mathfrak{g}^\Gamma)=(\mathfrak{e}_{6(-14)},\mathfrak{so}(8,2)\oplus\mathfrak{so}(2),\mathfrak{so}(8,1))$, $(\mathfrak{g},\mathfrak{g}^\Gamma)$ is a Klein four symmetric pair. Moreover, there exists a unitarizable simple $(\mathfrak{g},K)$-module that is discretely decomposable as a $(\mathfrak{g}^\sigma,K^\sigma)$-module and is discretely decomposable as a $(\mathfrak{g}^\Gamma,K^\Gamma)$-module.
\end{proposition}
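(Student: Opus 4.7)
The plan is to verify the claim in three stages: construct a Klein four group on $\mathfrak{e}_{6(-14)}$ with the stated fixed-point algebras, exhibit a suitable highest-weight module $X$, and check its discrete decomposability as a $(\mathfrak{g}^\Gamma,K^\Gamma)$-module by passing through the intermediate subalgebra $\mathfrak{g}^\sigma$.

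\emph{Construction of $\Gamma$.} Take $\sigma$ to be the involution of holomorphic type on $\mathfrak{e}_{6(-14)}$ with $\mathfrak{g}^\sigma=\mathfrak{so}(8,2)\oplus\mathfrak{so}(2)$ available from \cite[Table C.2]{KO1}. Inside the $\mathfrak{so}(8,2)$-summand, the subalgebra $\mathfrak{so}(8,1)$ is the fixed-point algebra of a reflection-type involution $\tau'$ obtained by conjugation by a non-isotropic vector in the defining $10$-dimensional representation. Extend $\tau'$ to a $\sigma$-commuting involution $\tau$ of $\mathfrak{e}_{6(-14)}$ which acts as $-1$ on the $\mathfrak{so}(2)$-summand; existence of such an extension is guaranteed by the classification of Klein four subgroups of $\mathrm{Aut}(\mathfrak{e}_6)$ in \cite{HY} and \cite{Y}. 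Then $\Gamma=\langle\sigma,\tau\rangle$ is Klein four with $\mathfrak{g}^\Gamma=\mathfrak{g}^\sigma\cap\mathfrak{g}^\tau=\mathfrak{so}(8,1)$.

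\emph{Candidate module and reduction.} Let $X$ be a nontrivial irreducible unitary highest-weight $(\mathfrak{g},K)$-module of $\mathrm{E}_{6(-14)}$; a concrete choice is the minimal representation. By \propositionref{6}, $X$ is discretely decomposable as a $(\mathfrak{g}^\sigma,K^\sigma)$-module with decomposition $X\cong\bigoplus_i(U_i\boxtimes\chi_i)$, each $U_i$ an irreducible unitary highest-weight $(\mathfrak{so}(8,2),\mathrm{SO}(8)\times\mathrm{SO}(2))$-module and each $\chi_i$ a character of $\mathfrak{so}(2)$. Since $\mathfrak{g}^\Gamma=\mathfrak{so}(8,1)$ sits entirely inside the $\mathfrak{so}(8,2)$-summand of $\mathfrak{g}^\sigma$ (its projection to the $\mathfrak{so}(2)$-factor vanishes because $\tau$ acts as $-1$ there), the induced $(\mathfrak{g}^\Gamma,K^\Gamma)$-action on each $U_i\boxtimes\chi_i$ is simply the $(\mathfrak{so}(8,1),\mathrm{SO}(8))$-action on $U_i$. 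Concatenating discrete decompositions of the $U_i$ over $\mathfrak{so}(8,1)$ then yields a discrete decomposition of $X$ over $\mathfrak{g}^\Gamma$, so the problem reduces to showing each $U_i$ is discretely decomposable as an $(\mathfrak{so}(8,1),\mathrm{SO}(8))$-module.

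\emph{The main obstacle} is this last reduction. Existence of some unitarizable discretely decomposable $\mathfrak{so}(8,2)$-module for the pair $(\mathfrak{so}(8,2),\mathfrak{so}(8,1))$ follows from \propositionref{9}: a direct root-system check gives $\tau'\beta'\neq-\beta'$ for the maximal noncompact root $\beta'$ of $\mathfrak{so}(8,2)$. Upgrading existence to the specific summands $U_i$ at hand requires the $K$-type criterion of \cite{Ko4}, namely that each $\mathrm{SO}(8)$-type appears in $U_i$ with finite multiplicity. When $X$ is taken as the minimal representation, the summands $U_i$ form a controlled family of unitary highest-weight $\mathrm{SO}(8,2)$-modules whose $\mathrm{SO}(8)\times\mathrm{SO}(2)$-type patterns, combined with the classical finite branching $\mathrm{SO}(8)\times\mathrm{SO}(2)\downarrow\mathrm{SO}(8)$, deliver the required finite multiplicities. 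This $K$-type verification, though concrete, is the technical heart of the argument and is where the Hermitian structure of both $\mathrm{E}_{6(-14)}$ and $\mathrm{SO}(8,2)$ is most crucially exploited.
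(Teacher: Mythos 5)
The paper does not actually prove this proposition internally: its proof is the single line ``See \cite[Proposition 10 \& Proposition 13]{H3}'', so there is no in-paper argument to compare yours against, and your proposal must stand on its own. Its architecture is sensible --- realize $\Gamma$ via the classification of elementary abelian $2$-subgroups, restrict in stages $\mathfrak{g}\supset\mathfrak{g}^\sigma\supset\mathfrak{g}^\Gamma$, and concatenate the two discrete decompositions through \propositionref{3} --- and the concatenation step itself is correct. But there are two gaps. The smaller one is the construction of $\Gamma$: the existence of an involution $\tau$ of $\mathfrak{e}_{6(-14)}$ commuting with $\sigma$, restricting to the reflection $\tau'$ on $\mathfrak{so}(8,2)$ and to $-1$ on the $\mathfrak{so}(2)$-summand, is precisely what has to be proved, and it is not ``guaranteed'' by citing \cite{HY} and \cite{Y} in the abstract; one must locate in \cite[Table 3]{Y} a Klein four subgroup with fixed-point algebra $\mathfrak{so}(9)$ (the compact dual of $\mathfrak{so}(8,1)$) and check its compatibility with the element cutting out the real form $\mathfrak{e}_{6(-14)}$ --- exactly the table-checking the paper performs in \lemmaref{26} and \lemmaref{32} for the other candidate pairs. (Also, \propositionref{6} does not formally apply here, since $(G,G^\Gamma)$ is not of holomorphic type; what you need is the symmetric-pair statement for $(G,G^\sigma)$ from \cite{Ko3} and \cite{Ko4}, which is true but should be cited as such.)

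The fatal gap is the step you yourself flag as ``the technical heart'': that every summand $U_i$ is discretely decomposable as an $(\mathfrak{so}(8,1),\mathrm{SO}(8))$-module is never established, and it is \emph{false} for general unitary highest-weight modules of $\mathfrak{so}(8,2)$, so it cannot be waved through. Concretely, realize $\mathfrak{so}(8,2)$ on $\mathbb{R}^{10}$ with the last two coordinates negative, so that $\mathfrak{p}^+\cong\{v\wedge(e_9+\sqrt{-1}e_{10})\mid v\in\mathbb{C}^8\}$. For a holomorphic discrete series $U$ of $\mathrm{SO}(8,2)$ the associated variety is all of $\mathfrak{p}^+$, and the projection $\mathrm{pr}_{\mathfrak{so}(8,2)\rightarrow\mathfrak{so}(8,1)}$ sends $v\wedge(e_9+\sqrt{-1}e_{10})$ to $A=v\wedge e_9$, which satisfies $A^3=-\langle v,v\rangle A$ and hence is not nilpotent whenever $v$ is non-isotropic; by \propositionref{8}, such $U$ is \emph{not} discretely decomposable over $\mathfrak{so}(8,1)$. (Equivalently, the trivial $\mathrm{SO}(8)$-type occurs in $U$ with infinite multiplicity, so your finite-multiplicity claim also fails for such $U$.) Consequently \propositionref{9}, which only asserts the existence of \emph{some} discretely decomposable module for the pair $(\mathfrak{so}(8,2),\mathfrak{so}(8,1))$, gives no control whatsoever over the particular summands $U_i$ arising in your decomposition: whether they are the special ``small'' ladder-type highest-weight modules, rather than holomorphic discrete series, is exactly the content of the proposition, and it is the part your proof omits. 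A complete argument must either compute the branching of a specific $X$ (e.g.\ the minimal representation) over $\mathfrak{so}(8,2)\oplus\mathfrak{so}(2)$ and verify the smallness of its summands, or bypass the two-step reduction entirely, for instance by proving $K^\Gamma$-admissibility of $X$ directly via Kobayashi's asymptotic-cone criterion in \cite{Ko3}.
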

\begin{proof}
See \cite[Proposition 10 \& Proposition 13]{H3}.
\end{proof}
In the next two subsections, the author will determine whether each of the remaining there pairs $(\mathfrak{g},\mathfrak{g}^\Gamma)$ is really a Klein four symmetric pair.
\subsection{Elementary abelian 2-subgroups of $\mathrm{Aut}\mathfrak{e}_{6(-78)}$}
Let $\mathfrak{e}_6$ be the complex simple Lie algebra of type $\mathrm{E}_6$. Fix a Cartan subalgebra of $\mathfrak{e}_6$ and a simple root system $\{\alpha_i\mid1\leq i\leq6\}$, the Dynkin diagram of which is given in Figure 1.
\begin{figure}
\centering \scalebox{0.7}{\includegraphics{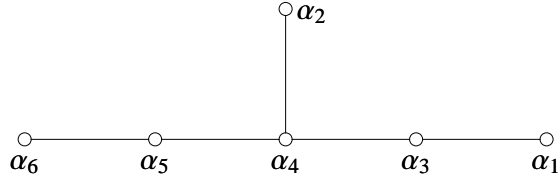}}
\caption{Dynkin diagram of $\mathrm{E}_6$.}
\end{figure}
For each root $\alpha$, denote by $H_\alpha$ its coroot, and denote by $X_\alpha$ the normalized root vector so that $[X_\alpha,X_{-\alpha}]=H_\alpha$. Moreover, one can normalize $X_\alpha$ appropriately such that\[\mathrm{Span}_\mathbb{R}\{X_\alpha-X_{-\alpha},\sqrt{-1}(X_\alpha+X_{-\alpha}),\sqrt{-1}H_\alpha\mid\alpha:\textrm{positive root}\}\cong\mathfrak{e}_{6(-78)}\]is a compact real form of $\mathfrak{e}_6$ by \cite{Kn}. It is well known that\[\mathrm{Aut}\mathfrak{e}_{6(-78)}/\mathrm{Int}\mathfrak{e}_{6(-78)}\cong\mathrm{Aut}\mathfrak{e}_6/\mathrm{Int}\mathfrak{e}_6\]which is just the automorphism group of the Dynkin diagram.

Follow the constructions of involutive automorphisms of $\mathfrak{e}_{6(-78)}$ in \cite{Y}. Let $\omega$ be the specific involutive automorphism of the Dynkin diagram defined by
\begin{eqnarray*}
\begin{array}{rclcrcl}
\omega(H_{\alpha_1})=H_{\alpha_6},&&\omega(X_{\pm\alpha_1})=X_{\pm\alpha_6},\\
\omega(H_{\alpha_2})=H_{\alpha_2},&&\omega(X_{\pm\alpha_2})=X_{\pm\alpha_2},\\
\omega(H_{\alpha_3})=H_{\alpha_5},&&\omega(X_{\pm\alpha_3})=X_{\pm\alpha_5},\\
\omega(H_{\alpha_4})=H_{\alpha_4},&&\omega(X_{\pm\alpha_4})=X_{\pm\alpha_4}.
\end{array}
\end{eqnarray*}
Let $\sigma_1=\mathrm{exp}(\sqrt{-1}\pi H_{\alpha_2})$, $\sigma_2=\mathrm{exp}(\sqrt{-1}\pi(H_{\alpha_1}+H_{\alpha_6}))$, $\sigma_3=\omega$, $\sigma_4=\omega\mathrm{exp}(\sqrt{-1}\pi H_{\alpha_2})$, where $\mathrm{exp}$ represents the exponential map from $\mathfrak{e}_{6(-78)}$ to $\mathrm{Aut}\mathfrak{e}_{6(-78)}$. Then $\sigma_1$, $\sigma_2$, $\sigma_3$, and $\sigma_4$ represent all conjugacy classes of involutive automorphisms in $\mathrm{Aut}\mathfrak{e}_{6(-78)}$, which correspond to real forms $\mathfrak{e}_{6(2)}$, $\mathfrak{e}_{6(-14)}$, $\mathfrak{e}_{6(-26)}$, and $\mathfrak{e}_{6(6)}$ respectively.

From \cite{Y}, it is known that $(\mathrm{Int}\mathfrak{e}_{6(-78)})^{\sigma_3}\cong\mathrm{F}_{4(-52)}$, the compact Lie group of type $\mathrm{F}_4$, and there exist involutive automorphisms $\tau_1$ and $\tau_2$ of $\mathrm{F}_{4(-52)}$ such that $\mathfrak{f}_{4(-52)}^{\tau_1}\cong\mathfrak{sp}(3)\oplus\mathfrak{sp}(1)$ and $\mathfrak{f}_{4(-52)}^{\tau_2}\cong\mathfrak{so}(9)$, where $\mathfrak{f}_{4(-52)}$ denotes the compact Lie algebra of type $\mathrm{F}_4$. Moreover, $\tau_1$ and $\tau_2$ represent all conjugacy classes of involutive automorphisms in $\mathrm{Aut}\mathfrak{f}_{4(-52)}$. Now, consider $((\mathrm{Int}\mathfrak{e}_{6(-78)})^{\sigma_3})^{\tau_1}\cong\mathrm{Sp}(3)\times\mathrm{Sp}(1)/\langle(-I_3,-1)\rangle$. Let $\mathbf{i}$, $\mathbf{j}$, and $\mathbf{k}$ denote the fundamental quaternion units, and then set $x_0=\sigma_3$, $x_1=\tau_1=(I_3,-1)$, $x_2=(\mathbf{i}I_3,\mathbf{i})$, $x_3=(\mathbf{j}I_3,\mathbf{j})$, $x_4=(\left(\begin{array}{ccc}-1&0&0\\0&-1&0\\0&0&1\end{array}\right),1)$, and $x_5=(\left(\begin{array}{ccc}-1&0&0\\0&1&0\\0&0&-1\end{array}\right),1)$.

For a pair $(r,s)$ of integers with $r\leq2$ and $s\leq3$, define\[F_{r,s}:=\langle x_0,x_1,\cdots,x_s,x_4,x_5,\cdots,x_{r+3}\rangle\]and\[F'_{r,s}:=\langle x_1,x_2,\cdots,x_s,x_4,x_5,\cdots,x_{r+3}\rangle.\]
\begin{remark}\label{28}
Besides the two families $F_{r,s}$ and $F'_{r,s}$ of the elementary abelian 2-subgroups for $\mathrm{Aut}\mathfrak{e}_{6(-78)}$ through $\sigma_3$, Jun YU also defined another two families $F_{u,v,r,s}$ and $F'_{u,v,r,s}$ through $\sigma_4$ in \cite{Y}. According to \cite[Proposition 6.3 \& Proposition 6.5]{Y}, each elementary abelian 2-subgroup in $\mathrm{Aut}\mathfrak{e}_{6(-78)}$ is conjugate to one of the groups in the one family of $F_{r,s}$, $F'_{r,s}$, $F_{u,v,r,s}$, and $F'_{u,v,r,s}$, and all groups in the families of $F_{r,s}$, $F'_{r,s}$, $F_{u,v,r,s}$, and $F'_{u,v,r,s}$ are pairwisely non-conjugate. In this article, $F_{u,v,r,s}$ and $F'_{u,v,r,s}$ will not be used, so the author does not write down the detailed constructions for them.
\end{remark}
\subsection{Klein four symmetric pairs for $\mathfrak{e}_{6(-14)}$}
If $\sigma$ is an involutive automorphism of $\mathfrak{e}_{6(-78)}$, which is conjugate to $\sigma_2$ as defined above, then by the construction, $\mathfrak{e}_{6(-14)}\cong\mathfrak{e}_{6(-78)}^\sigma+\sqrt{-1}\mathfrak{e}_{6(-78)}^{-\sigma}$, where $\mathfrak{e}_{6(-78)}^{\pm\sigma}$ are the eigenspaces of eigenvalues $\pm1$ under the action of $\sigma$ on $\mathfrak{e}_{6(-78)}$. Thus, if $\langle\sigma,\mu_1,\mu_2,\cdots,\mu_n\rangle$ is an elementary abelian 2-subgroups of rank $n+1$ of $\mathrm{Aut}\mathfrak{e}_{6(-78)}$, then $\langle\mu_1,\mu_2,\cdots,\mu_n\rangle$ is an elementary abelian 2-subgroups of rank $n$ of $\mathrm{Aut}\mathfrak{e}_{6(-14)}$. Conversely, if $\langle\eta_1,\eta_2,\cdots,\eta_n\rangle$ is an elementary abelian 2-subgroups of rank $n$ of $\mathrm{Aut}\mathfrak{e}_{6(-14)}$, by \cite[Proposition 1]{H1} there exists a Cartan involution $\sigma\in\mathrm{Aut}\mathfrak{e}_{6(-14)}$ such that $\langle\sigma,\eta_1,\eta_2,\cdots,\eta_n\rangle$ is an elementary abelian 2-subgroups of rank $n+1$ of $\mathrm{Aut}\mathfrak{e}_{6(-78)}$. Under this correspondence, $\mathfrak{e}_{6(-78)}^{\langle\eta_1,\eta_2,\cdots,\eta_n\rangle}$ is the compact dual of $\mathfrak{e}_{6(-14)}^{\langle\eta_1,\eta_2,\cdots,\eta_n\rangle}$ whose maximal compact subalgebra is $\mathfrak{e}_{6(-14)}^{\langle\sigma,\eta_1,\eta_2,\cdots,\eta_n\rangle}\cong\mathfrak{e}_{6(-78)}^{\langle\sigma,\eta_1,\eta_2,\cdots,\eta_n\rangle}$.
\begin{lemma}\label{26}
The pair $(\mathfrak{e}_{6(-14)},\mathfrak{sp}(2,1)\oplus\mathfrak{su}(2))$ is a Klein four symmetric pair, while neither $(\mathfrak{e}_{6(-14)},\mathfrak{so}(8,1)\oplus\mathfrak{so}(2))$ nor $(\mathfrak{e}_{6(-14)},\mathfrak{sp}(2,1)\oplus\mathfrak{so}(2))$ is.
\end{lemma}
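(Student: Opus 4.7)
The plan is to use the correspondence established just above: $(\mathfrak{e}_{6(-14)},\mathfrak{e}_{6(-14)}^\Gamma)$ is a Klein four symmetric pair if and only if there is a rank-$3$ elementary abelian $2$-subgroup $\tilde\Gamma=\langle\sigma,\eta_1,\eta_2\rangle$ of $\mathrm{Aut}\mathfrak{e}_{6(-78)}$ with $\sigma$ a Cartan involution of $\mathfrak{e}_{6(-14)}$ (equivalently, $\sigma$ in the conjugacy class of $\sigma_2$), and in that case $\mathfrak{e}_{6(-78)}^{\langle\eta_1,\eta_2\rangle}$ is the compact dual of $\mathfrak{e}_{6(-14)}^\Gamma$ while $\mathfrak{e}_{6(-78)}^{\tilde\Gamma}$ is its maximal compact subalgebra. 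I first record the three compact duals: $\mathfrak{sp}(2,1)\oplus\mathfrak{su}(2)$, $\mathfrak{so}(8,1)\oplus\mathfrak{so}(2)$, and $\mathfrak{sp}(2,1)\oplus\mathfrak{so}(2)$ have compact duals $\mathfrak{sp}(3)\oplus\mathfrak{sp}(1)$, $\mathfrak{so}(9)\oplus\mathfrak{so}(2)$, and $\mathfrak{sp}(3)\oplus\mathfrak{so}(2)$ respectively.

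For the affirmative case, I would exhibit the subgroup $F_{1,1}=\langle x_0,x_1,x_4\rangle$ from the previous subsection. From the setup, $\mathfrak{e}_{6(-78)}^{\langle\sigma_3,\tau_1\rangle}=\mathfrak{f}_{4(-52)}^{\tau_1}\cong\mathfrak{sp}(3)\oplus\mathfrak{sp}(1)$, matching the required compact dual. A short matrix-entry computation for the action of $x_4=(\mathrm{diag}(-1,-1,1),1)$ by conjugation on $\mathfrak{sp}(3)\oplus\mathfrak{sp}(1)$ gives fixed subalgebra $\mathfrak{sp}(2)\oplus\mathfrak{sp}(1)\oplus\mathfrak{sp}(1)$, the expected maximal compact subalgebra of $\mathfrak{sp}(2,1)\oplus\mathfrak{su}(2)$. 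The last step is to identify a $\sigma_2$-conjugate element inside $F_{1,1}$; this is visible from Yu's determination of the conjugacy classes of the seven nontrivial involutions of $F_{1,1}$, and collapsing by that Cartan involution produces the desired Klein four subgroup of $\mathrm{Aut}\mathfrak{e}_{6(-14)}$.

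For $(\mathfrak{e}_{6(-14)},\mathfrak{so}(8,1)\oplus\mathfrak{so}(2))$ a rank argument suffices. The compact dual $\mathfrak{so}(9)\oplus\mathfrak{so}(2)$ has rank $5$, whereas the fixed subalgebra of any rank-$2$ elementary abelian $2$-subgroup of $\mathrm{Aut}\mathfrak{e}_{6(-78)}$ has rank $4$ or $6$: outer involutions of $\mathfrak{e}_6$ have rank-$4$ fixed subalgebras ($\mathfrak{f}_4$ or $\mathfrak{sp}(4)$), inner ones have rank $6$, and since the product of two outer involutions is inner, a Klein four subgroup contains either $0$ or $2$ outer nontrivial elements; in the first case two commuting inner involutions share a common Cartan subalgebra so the fixed subalgebra has rank $6$, and in the second case it is contained in the rank-$4$ fixed subalgebra of an outer involution. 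Rank $5$ cannot occur.

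For $(\mathfrak{e}_{6(-14)},\mathfrak{sp}(2,1)\oplus\mathfrak{so}(2))$, I would appeal to Yu's classification. By \propositionref{23}, any $\tau\in\Gamma\setminus\{1,\sigma\}$ preserves both simple ideals of $\mathfrak{g}^\sigma=\mathfrak{su}(4,2)\oplus\mathfrak{su}(2)$ and would have to restrict to involutions producing $\mathfrak{sp}(2,1)$ and $\mathfrak{so}(2)$ on the respective factors. I would then inspect Yu's four families $F_{r,s}$, $F'_{r,s}$, $F_{u,v,r,s}$, $F'_{u,v,r,s}$ of rank-$3$ elementary abelian $2$-subgroups of $\mathrm{Aut}\mathfrak{e}_{6(-78)}$ and verify, case by case, that no such subgroup containing a $\sigma_2$-type Cartan involution has $\mathfrak{e}_{6(-78)}^{\langle\eta_1,\eta_2\rangle}\cong\mathfrak{sp}(3)\oplus\mathfrak{so}(2)$ with maximal compact subalgebra $\mathfrak{sp}(2)\oplus\mathfrak{sp}(1)\oplus\mathfrak{so}(2)$. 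This inspection is the main obstacle: unlike the rank argument for the second case, no single numerical invariant rules out $\mathfrak{sp}(3)\oplus\mathfrak{so}(2)$ in one stroke, so one must work through Yu's tables.
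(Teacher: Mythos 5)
Your affirmative case is essentially the paper's own argument: the same subgroup $F_{1,1}=\langle x_0,x_1,x_4\rangle$, the same chain $\mathfrak{e}_{6(-78)}^{\langle x_0,x_1\rangle}=(\mathfrak{e}_{6(-78)}^{\sigma_3})^{\tau_1}=\mathfrak{f}_{4(-52)}^{\tau_1}\cong\mathfrak{sp}(3)\oplus\mathfrak{sp}(1)$, and the same appeal to Yu's conjugacy data to locate a $\sigma_2$-conjugate in $F_{1,1}$; replacing the paper's second table lookup (the Klein four subgroup $\Gamma_2$ of $\mathrm{Aut}\mathfrak{f}_{4(-52)}$, with fixed points $\mathfrak{so}(5)\oplus2\mathfrak{su}(2)$) by a direct computation of the $x_4$-fixed points in $\mathfrak{sp}(3)\oplus\mathfrak{sp}(1)$ is legitimate and somewhat more self-contained. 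Your rank argument for $(\mathfrak{e}_{6(-14)},\mathfrak{so}(8,1)\oplus\mathfrak{so}(2))$ is a genuinely different and more elementary route than the paper's (which is a table lookup), but it silently uses a nontrivial claim: that two commuting inner involutions of $\mathrm{Int}\,\mathfrak{e}_{6(-78)}$ lie in a common maximal torus. That statement fails in a general adjoint compact group (e.g.\ the commuting diagonal involutions in $\mathrm{SO}(3)$ lie in no common torus); for $E_6$ it is true, but only because the simply connected group has center $\mathbb{Z}/3\mathbb{Z}$ of odd order, so commuting involutions lift to commuting involutions upstairs, where Steinberg's connectedness theorem applies. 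You should supply this justification.

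The genuine gap is the third pair. For $(\mathfrak{e}_{6(-14)},\mathfrak{sp}(2,1)\oplus\mathfrak{so}(2))$ you only announce a plan --- inspect all rank-$3$ elementary abelian $2$-subgroups in Yu's four families that contain a $\sigma_2$-conjugate --- and you explicitly flag this inspection as ``the main obstacle'' without carrying it out, so nothing is actually proved for this case. Moreover, the plan is harder than necessary, and the shortcut is precisely the correspondence you set up in your first paragraph: if $(\mathfrak{e}_{6(-14)},\mathfrak{e}_{6(-14)}^\Gamma)$ were a Klein four symmetric pair, the compact dual of $\mathfrak{e}_{6(-14)}^\Gamma$ would be $\mathfrak{e}_{6(-78)}^{\langle\eta_1,\eta_2\rangle}$ for a Klein four (rank-$2$, not rank-$3$) subgroup of $\mathrm{Aut}\,\mathfrak{e}_{6(-78)}$, and Yu's Table 3 classifies exactly these together with their fixed-point algebras. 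One glance at that table shows that neither $\mathfrak{sp}(3)\oplus\mathfrak{so}(2)$ nor $\mathfrak{so}(9)\oplus\mathfrak{so}(2)$ occurs as $\mathfrak{e}_{6(-78)}^\Gamma$; this single lookup is the paper's entire proof of the negative statement, requires no tracking of the Cartan involution and no rank-$3$ analysis, and it also disposes of your second case without the common-torus claim.
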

\begin{proof}
Retain all the notations as in Section 4.2. For the first statement, consider the subgroup $F_{1,1}=\langle x_0,x_1,x_4\rangle$ of $\mathrm{Aut}\mathfrak{e}_{6(-78)}$. It is known from \cite{Y} and \cite[Lemma 8]{H1} that $x_0$ is conjugate to $\sigma_3$, $x_1$ is conjugate to $\sigma_1$, and $x_4$ is conjugate to $\sigma_2$. Then $\mathfrak{e}_{6(-14)}\cong\mathfrak{e}_{6(-78)}^{x_4}+\sqrt{-1}\mathfrak{e}_{6(-78)}^{-x_4}$. According to \cite[Table 3]{Y}, up to conjugation, the only Klein four subgroup of $\mathrm{Aut}\mathfrak{e}_{6(-78)}$, which contains both an element conjugate to $\sigma_1$ and an element conjugate to $\sigma_3$, is $\Gamma_5$ (using the notation in \cite[Table 3]{Y}), so $\mathfrak{e}_{6(-78)}^{\langle x_0,x_1\rangle}\cong\mathfrak{e}_{6(-78)}^{\Gamma_5}\cong\mathfrak{sp}(3)\oplus\mathfrak{su}(2)$ is the compact dual of $\mathfrak{e}_{6(-14)}^{\langle x_0,x_1\rangle}$. On the other hand, $\mathfrak{e}_{6(-14)}^{\langle x_0,x_1,x_4\rangle}=\mathfrak{e}_{6(-78)}^{\langle x_0,x_1,x_4\rangle}$ is the maximal compact subalgebra of $\mathfrak{e}_{6(-14)}^{\langle x_0,x_1\rangle}$. Notice that $\mathfrak{e}_{6(-78)}^{x_0}\cong\mathfrak{f}_{4(-52)}$. Moreover, $x_1$ is conjugate to $\tau_1$ and $x_4$ is conjugate to $\tau_2$ in $\mathrm{Aut}\mathfrak{f}_{4(-52)}$. Again by \cite[Table 3]{Y}, up to conjugation, the only Klein four subgroup of $\mathrm{Aut}\mathfrak{f}_{4(-52)}$, which contains both an element conjugate to $\tau_1$ and an element conjugate to $\tau_2$, is $\Gamma_2$ (using the notation in \cite[Table 3]{Y}), so $\mathfrak{e}_{6(-14)}^{\langle x_0,x_1,x_4\rangle}=(\mathfrak{e}_{6(-78)}^{x_0})^{\langle x_1,x_4\rangle}\cong\mathfrak{f}_{4(-52)}^{\langle x_1,x_4\rangle}\cong\mathfrak{f}_{4(-52)}^{\Gamma_2}\cong\mathfrak{so}(5)\oplus2\mathfrak{su}(2)$ is the maximal compact subalgebra of $\mathfrak{e}_{6(-14)}^{\langle x_0,x_1\rangle}$. Thus, one obtains $\mathfrak{e}_{6(-14)}^{\langle x_0,x_1\rangle}\cong\mathfrak{sp}(2,1)\oplus\mathfrak{su}(2)$. For the second statement, it follows immediately from the fact that there is no Klein four subgroups $\Gamma\subseteq\mathrm{Aut}\mathfrak{e}_{6(-78)}$ such that $\mathfrak{e}_{6(-78)}^\Gamma$ is conjugate to $\mathfrak{so}(9)\oplus\mathfrak{so}(2)$ or $\mathfrak{sp}(3)\oplus\mathfrak{so}(2)$, the compact dual of $\mathfrak{so}(8,1)\oplus\mathfrak{so}(2)$ or $\mathfrak{sp}(2,1)\oplus\mathfrak{so}(2)$, according to \cite[Table 3]{Y}.
\end{proof}
\subsection{Discrete decomposability for $(\mathfrak{e}_{6(-14)},\mathfrak{sp}(2,1)\oplus\mathfrak{su}(2))$}
In the final subsection, the author focuses on the Klein four symmetric pair $(\mathfrak{e}_{6(-14)},\mathfrak{sp}(2,1)\oplus\mathfrak{su}(2))$. In order to apply \theoremref{10} or \corollaryref{12} to the pair, the author needs to know how each nonidentity element behaves on $\mathfrak{e}_{6(-14)}$. Retain the notations as in Subsection 4.2, it is known from the proof for \lemmaref{26} that $\mathfrak{e}_{6(-14)}\cong\mathfrak{e}_{6(-78)}^{x_4}+\sqrt{-1}\mathfrak{e}_{6(-78)}^{-x_4}$ and $\Gamma=\langle x_0,x_1\rangle$.
\begin{lemma}\label{32}
One has $\mathfrak{e}_{6(-14)}^{x_0}\cong\mathfrak{f}_{4(-20)}$, $\mathfrak{e}_{6(-14)}^{x_1}\cong\mathfrak{su}(4,2)\oplus\mathfrak{su}(2)$, and $\mathfrak{e}_{6(-14)}^{x_0x_1}\cong\mathfrak{sp}(2,2)$.
\end{lemma}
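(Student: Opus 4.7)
The plan is to compute each of the three fixed-point subalgebras by passing to the compact side, exactly in the style of the proof of \lemmaref{26}. Recall the correspondence explained at the start of Subsection 4.3: for any $\eta\in\mathrm{Aut}\mathfrak{e}_{6(-14)}$ commuting with the Cartan involution $x_4$, the subalgebra $\mathfrak{e}_{6(-78)}^{\eta}$ is the compact dual of $\mathfrak{e}_{6(-14)}^{\eta}$, whose maximal compact subalgebra is $\mathfrak{e}_{6(-78)}^{\langle\eta,x_4\rangle}$. Since the isomorphism class of a noncompact real form is determined by the pair (compact dual, maximal compact subalgebra), my task reduces to identifying, for each of $\eta\in\{x_0,x_1,x_0x_1\}$, the two compact Lie algebras $\mathfrak{e}_{6(-78)}^{\eta}$ and $\mathfrak{e}_{6(-78)}^{\langle\eta,x_4\rangle}$.

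For $\eta=x_0$, the proof of \lemmaref{26} already records $x_0\sim\sigma_3$, giving $\mathfrak{e}_{6(-78)}^{x_0}\cong\mathfrak{f}_{4(-52)}$; the maximal compact piece $\mathfrak{e}_{6(-78)}^{\langle x_0,x_4\rangle}$ is then the fixed subalgebra of a Klein four subgroup containing an element conjugate to $\sigma_3$ and an element conjugate to $\sigma_2$, and a look at \cite[Table 3]{Y} should show there is a unique such conjugacy class of Klein four subgroups, with fixed subalgebra $\mathfrak{so}(9)$. Matching to the list of real forms of $\mathfrak{f}_4$ then forces $\mathfrak{e}_{6(-14)}^{x_0}\cong\mathfrak{f}_{4(-20)}$. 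For $\eta=x_1$, we have $x_1\sim\sigma_1$, so $\mathfrak{e}_{6(-78)}^{x_1}\cong\mathfrak{su}(6)\oplus\mathfrak{su}(2)$; in \cite[Table 3]{Y} I expect to find a unique Klein four class containing one $\sigma_1$-type and one $\sigma_2$-type involution, giving maximal compact $\mathfrak{s}(\mathfrak{u}(4)\oplus\mathfrak{u}(2))\oplus\mathfrak{su}(2)$, and the unique real form of $\mathfrak{su}(6)\oplus\mathfrak{su}(2)$ having this maximal compact subalgebra is $\mathfrak{su}(4,2)\oplus\mathfrak{su}(2)$.

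For $\eta=x_0x_1$, the first substep is to identify its conjugacy class in $\mathrm{Aut}\mathfrak{e}_{6(-78)}$. Since $x_0\sim\sigma_3$ is outer (a diagram automorphism) and $x_1\sim\sigma_1$ is inner, the product $x_0x_1$ is outer, so it is conjugate to either $\sigma_3$ or $\sigma_4$; the presentation $\sigma_4=\omega\,\mathrm{exp}(\sqrt{-1}\pi H_{\alpha_2})$ matches the form of $x_0x_1$ exactly, which points to $x_0x_1\sim\sigma_4$ and hence $\mathfrak{e}_{6(-78)}^{x_0x_1}\cong\mathfrak{sp}(4)$. This can be double-checked from the Klein four subgroup $\langle x_0,x_1\rangle$ identified in \lemmaref{26} as $\Gamma_5$ by reading off the third conjugacy class in its row of \cite[Table 3]{Y}. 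Then I would identify the Klein four subgroup $\langle x_0x_1,x_4\rangle$ (one $\sigma_4$-element, one $\sigma_2$-element) in the same table; the expected fixed subalgebra is $\mathfrak{sp}(2)\oplus\mathfrak{sp}(2)$, and the unique real form of $\mathfrak{sp}(4)$ with this maximal compact subalgebra is $\mathfrak{sp}(2,2)$.

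The main obstacle I anticipate is purely bookkeeping: correctly locating each of the relevant Klein four conjugacy classes in Yu's list and verifying that the conjugacy-class data of the three nonidentity involutions pins the class down uniquely. The argument for $\eta=x_0x_1$ is the most delicate because it requires first pinning down the conjugacy class of $x_0x_1$ itself, and then recognizing the right row of the table; but all three steps are parallel in structure to the argument already carried out in \lemmaref{26}, so once the table entries are extracted the conclusions follow by matching compact duals and maximal compact subalgebras against the classification of real forms of $\mathfrak{f}_4$, $\mathfrak{su}(6)\oplus\mathfrak{su}(2)$, and $\mathfrak{sp}(4)$ respectively.
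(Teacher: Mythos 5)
Your overall strategy is exactly the paper's: pass to the compact form, identify the compact dual $\mathfrak{e}_{6(-78)}^{\eta}$ from the conjugacy class of $\eta$, identify the maximal compact subalgebra as $\mathfrak{e}_{6(-78)}^{\langle\eta,x_4\rangle}$ from the classification of Klein four subgroups, and match against the real forms. Your treatment of $x_0$ agrees with the paper (the pair of types $(\sigma_2,\sigma_3)$ does pin down a unique class, $\Gamma_7$, with fixed algebra $\mathfrak{so}(9)$), and for $x_0x_1$ your fallback argument --- reading the type of the third element of $\langle x_0,x_1\rangle\cong\Gamma_5$ off its row of \cite[Table 3]{Y} to get $x_0x_1\sim\sigma_4$, then locating $\langle x_0x_1,x_4\rangle$ as the class with types $(\sigma_2,\sigma_4,\sigma_4)$ and fixed algebra $2\mathfrak{sp}(2)$ --- is sound and essentially what the paper cites from \cite{Y}. (Your primary heuristic there, that $x_0x_1$ ``matches the form'' of $\sigma_4=\omega\,\mathrm{exp}(\sqrt{-1}\pi H_{\alpha_2})$, is not by itself a proof: $x_1=\tau_1$ is defined via the $\mathrm{Sp}(3)\times\mathrm{Sp}(1)$ structure inside $\mathrm{F}_{4(-52)}$, not as $\mathrm{exp}(\sqrt{-1}\pi H_{\alpha_2})$, so the resemblance of symbols proves nothing; keep the table-based argument.)

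The genuine gap is in the case $\eta=x_1$. You assert that \cite[Table 3]{Y} contains a unique class of Klein four subgroups having one element of type $\sigma_1$ and one of type $\sigma_2$. That is false: the pair of types does not determine the class, because the type of the third element $x_1x_4$ is not forced. There are two distinct classes, with element types $(\sigma_1,\sigma_1,\sigma_2)$ and $(\sigma_1,\sigma_2,\sigma_2)$ respectively; their fixed-point algebras are $\mathfrak{su}(4)\oplus2\mathfrak{su}(2)\oplus\mathfrak{so}(2)$ (dimension $22$) and $\mathfrak{su}(5)\oplus2\mathfrak{so}(2)$ (dimension $26$), consistently with the identity $\dim\mathfrak{g}^{\sigma}+\dim\mathfrak{g}^{\tau}+\dim\mathfrak{g}^{\sigma\tau}=\dim\mathfrak{g}+2\dim\mathfrak{g}^{\Gamma}$, since $38+38+46=122=78+2\cdot22$ while $38+46+46=130=78+2\cdot26$. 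The two possibilities lead to genuinely different answers: maximal compact subalgebra $\mathfrak{su}(4)\oplus2\mathfrak{su}(2)\oplus\mathfrak{so}(2)$ forces $\mathfrak{e}_{6(-14)}^{x_1}\cong\mathfrak{su}(4,2)\oplus\mathfrak{su}(2)$, whereas $\mathfrak{su}(5)\oplus2\mathfrak{so}(2)$ would force $\mathfrak{su}(5,1)\oplus\mathfrak{sl}(2,\mathbb{R})$ --- and both of these actually occur as symmetric subalgebras of $\mathfrak{e}_{6(-14)}$ (they are two of the four holomorphic-type symmetric pairs listed in Section 4.1), so the ambiguity cannot be dismissed. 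The paper closes precisely this gap with a step your proposal omits: it determines the conjugacy class of the third element, showing $x_1x_4\sim\sigma_1$ by \cite[Lemma 8]{H1}, which identifies $\langle x_1,x_4\rangle$ as the $(\sigma_1,\sigma_1,\sigma_2)$ class and yields $\mathfrak{e}_{6(-14)}^{x_1}\cong\mathfrak{su}(4,2)\oplus\mathfrak{su}(2)$ as stated in \lemmaref{32}.
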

\begin{proof}
Since $x_0$ is conjugate to $\sigma_3$ in $\mathrm{Aut}\mathfrak{e}_{6(-14)}$, it follows from \cite[Table 2]{Y} that $\mathfrak{e}_{6(-78)}^{x_0}\cong\mathfrak{f}_{4(-52)}$, which is the compact dual of $\mathfrak{e}_{6(-14)}^{x_0}$. Moreover, since $x_4$ is conjugate to $\sigma_2$, according to \cite[Table 3]{Y}, up to conjugation the only Klein four subgroup of $\mathrm{Aut}\mathfrak{e}_{6(-14)}$, which contains both an element conjugate to $\sigma_2$ and an element conjugate to $\sigma_3$, is $\Gamma_7$ (using the notation in \cite[Table 3]{Y}). Thus, $\mathfrak{e}_{6(-14)}^{\langle x_0,x_4\rangle}\cong\mathfrak{so}(9)$ which is the maximal compact subalgebra of $\mathfrak{e}_{6(-14)}^{x_0}$. Therefore, $\mathfrak{e}_{6(-14)}^{x_0}\cong\mathfrak{f}_{4(-20)}$.

Since $x_1$ is conjugate to $\sigma_1$ in $\mathrm{Aut}\mathfrak{e}_{6(-14)}$, it follows from \cite[Table 2]{Y} that $\mathfrak{e}_{6(-78)}^{x_1}\cong\mathfrak{su}(6)\oplus\mathfrak{su}(2)$, which is the compact dual of $\mathfrak{e}_{6(-14)}^{x_1}$. Moreover, since $x_4$ is conjugate to $\sigma_2$ and $x_1x_4$ is conjugate to $\sigma_1$ in $\mathrm{Aut}\mathfrak{e}_{6(-14)}$ by \cite[Lemma 8]{H1}, according to \cite[Table 3]{Y}, $\mathfrak{e}_{6(-14)}^{\langle x_1,x_4\rangle}=\mathfrak{e}_{6(-78)}^{\langle x_1,x_4\rangle}\cong\mathfrak{su}(4)\oplus2\mathfrak{su}(2)\oplus\mathfrak{so}(2)$ which is the maximal compact subalgebra of $\mathfrak{e}_{6(-14)}^{x_1}$. Therefore, $\mathfrak{e}_{6(-14)}^{x_1}\cong\mathfrak{su}(4,2)\oplus\mathfrak{su}(2)$.

It is known from \cite{Y} that $x_0x_1=\sigma_3\tau_1$ is conjugate to $\sigma_4$, and hence $\mathfrak{e}_{6(-78)}^{x_0x_1}\cong\mathfrak{sp}(4)$ by \cite[Table 2]{Y}, which is the compact dual of $\mathfrak{e}_{6(-14)}^{x_0x_1}$. Moreover, since $x_4$ is conjugate to $\sigma_2$, according to \cite[Table 3]{Y}, up to conjugation the only Klein four subgroup of $\mathrm{Aut}\mathfrak{e}_{6(-14)}$, which contains both an element conjugate to $\sigma_2$ and an element conjugate to $\sigma_4$, is $\Gamma_8$ (using the notation in \cite[Table 3]{Y}). Thus, $\mathfrak{e}_{6(-14)}^{\langle x_0x_1,x_4\rangle}\cong2\mathfrak{so}(5)\cong2\mathfrak{sp}(2)$ which is the maximal compact subalgebra of $\mathfrak{e}_{6(-14)}^{x_0}$. Therefore, $\mathfrak{e}_{6(-14)}^{x_0x_1}\cong\mathfrak{sp}(2,2)$.
\end{proof}
\begin{lemma}\label{17}
There does not exist any nontrivial unitarizable simple $(\mathfrak{e}_{6(-14)},\mathrm{SO}(10)\times\mathrm{SO}(2))$-module that is discretely decomposable as a $(\mathfrak{sp}(2,1)\oplus\mathfrak{su}(2),\mathrm{Sp}(2)\times\mathrm{SU}(2)^2)$-module.
\end{lemma}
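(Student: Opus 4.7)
The plan is to apply \corollaryref{12} to the Klein four symmetric pair $(\mathfrak{e}_{6(-14)}, \mathfrak{sp}(2,1)\oplus\mathfrak{su}(2))$ with $\Gamma = \langle x_0, x_1 \rangle$, taking $\sigma := x_0$ and $\tau := x_1$ in the statement of that corollary. By \lemmaref{32} the three symmetric pairs determined by the nonidentity elements of $\Gamma$ are
\[
(\mathfrak{e}_{6(-14)}, \mathfrak{f}_{4(-20)}), \quad (\mathfrak{e}_{6(-14)}, \mathfrak{su}(4,2)\oplus\mathfrak{su}(2)), \quad (\mathfrak{e}_{6(-14)}, \mathfrak{sp}(2,2)),
\]
corresponding to $x_0$, $x_1$, and $x_0 x_1$ respectively. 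So all that remains is to verify the three hypotheses (1), (2), (3) of \corollaryref{12} for this choice; once this is done, the desired nonexistence follows immediately.

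For hypothesis (1), the pair $(\mathfrak{e}_{6(-14)}, \mathfrak{f}_{4(-20)})$ appears in the Kobayashi--Oshima classification \cite[Theorem 5.2 \& Table 1]{KO2} as a non-holomorphic example admitting a nontrivial discretely decomposable unitarizable simple $(\mathfrak{g},K)$-module, so by \propositionref{9} we have $x_0 \beta \neq -\beta$. For hypothesis (2), the pair $(\mathfrak{e}_{6(-14)}, \mathfrak{su}(4,2)\oplus\mathfrak{su}(2))$ is one of the four holomorphic-type examples tabulated in the discussion preceding \propositionref{31}, so by \propositionref{6} every irreducible unitary highest weight $(\mathfrak{g},K)$-module restricts discretely and $x_1 \beta \neq -\beta$.

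The crux is hypothesis (3): I must show that $(\mathfrak{e}_{6(-14)}, \mathfrak{sp}(2,2))$ admits \emph{no} nontrivial unitarizable simple $(\mathfrak{g},K)$-module that is discretely decomposable as a $(\mathfrak{g}^{x_0 x_1}, K^{x_0 x_1})$-module; equivalently, $(x_0 x_1)\beta = -\beta$ by \propositionref{9}. My approach is to consult the complete Kobayashi--Oshima list of symmetric pairs $(\mathfrak{g}, \mathfrak{g}^\sigma)$ admitting discretely decomposable restrictions in \cite[Table 1]{KO2} and observe that $\mathfrak{sp}(2,2)$ does not appear among the fixed-point subalgebras listed for $\mathfrak{e}_{6(-14)}$ — the five entries are the four holomorphic-type examples recalled in Section 4.1 plus the non-holomorphic $\mathfrak{f}_{4(-20)}$. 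This is the step most liable to require care, since one must also confirm that $\mathfrak{sp}(2,2)$ is not disguised under a different name in the table. With all three hypotheses verified, \corollaryref{12} closes the argument.
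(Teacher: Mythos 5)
Your proposal is correct and follows essentially the same route as the paper: apply \corollaryref{12} to $\Gamma=\langle x_0,x_1\rangle$ using \lemmaref{32} to identify the three fixed-point subalgebras $\mathfrak{f}_{4(-20)}$, $\mathfrak{su}(4,2)\oplus\mathfrak{su}(2)$, $\mathfrak{sp}(2,2)$, and then check hypotheses (1)--(3) against the Kobayashi--Oshima classification \cite[Theorem 5.2 \& Corollary 5.8 \& Table 1]{KO2}. The only cosmetic difference is that for hypothesis (2) you invoke \propositionref{6} (stated for Klein four pairs of holomorphic type) rather than citing \cite{KO2} directly as the paper does; the underlying fact you need --- that a symmetric pair of holomorphic type admits discretely decomposable highest weight modules --- is standard and also used elsewhere in the paper, so this is a citation-level imprecision rather than a gap.
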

\begin{proof}
It is known from \cite[Theorem 5.2 \& Corollary 5.8 \& Table 1]{KO2} that there exists a nontrivial unitarizable simple $(\mathfrak{e}_{6(-14)},\mathrm{SO}(10)\times\mathrm{SO}(2))$-module which is discretely decomposable as a $(\mathfrak{f}_{4(-20)},\mathfrak{so}(9))$-module, that there exists a nontrivial unitarizable simple $(\mathfrak{e}_{6(-14)},\mathrm{SO}(10)\times\mathrm{SO}(2))$-module which is discretely decomposable as a $(\mathfrak{su}(4,2)\oplus\mathfrak{su}(2),\mathrm{SU}(2)^2\times\mathrm{SO}(2))$-module, and that there does not exist any nontrivial unitarizable simple $(\mathfrak{e}_{6(-14)},\mathrm{SO}(10)\times\mathrm{SO}(2))$-module which is discretely decomposable as a $(\mathfrak{sp}(2,2),\mathrm{Sp}(2)^2)$-module. Therefore, the conclusion follows from \lemmaref{32} and \corollaryref{12}.
\end{proof}
Now the author can solve \problemref{14} for $G=\mathrm{E}_{6(-14)}$.
\begin{theorem}\label{18}
Let $G=\mathrm{E}_{6(-14)}$ and $(G,G^\Gamma)$ a Klein four symmetric pair with $G^\Gamma$ noncompact. Then there exists a nontrivial unitarizable simple $(\mathfrak{g},K)$-module $X$ that is discretely decomposable as a $(\mathfrak{g}^\Gamma,K^\Gamma)$-module and is also discretely decomposable as a $(\mathfrak{g}^\sigma,K^\sigma)$-module for some nonidentity element $\sigma\in\Gamma$, if and only if $(\mathfrak{g},\mathfrak{g}^\Gamma)=(\mathfrak{e}_{6(-14)},\mathfrak{so}(8,1))$.
\end{theorem}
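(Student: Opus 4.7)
The plan is to prove \theoremref{18} by splitting the ``if and only if'' into its two directions and exploiting the technical groundwork already laid in Subsections 4.1 through 4.4, so that the proof itself reduces to assembling existing pieces.

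For sufficiency, when $(\mathfrak{g},\mathfrak{g}^\Gamma)=(\mathfrak{e}_{6(-14)},\mathfrak{so}(8,1))$, I would cite \propositionref{31} directly. Taking $(\mathfrak{g},\mathfrak{g}^\sigma,\mathfrak{g}^\Gamma) = (\mathfrak{e}_{6(-14)},\mathfrak{so}(8,2)\oplus\mathfrak{so}(2),\mathfrak{so}(8,1))$, that proposition produces a nontrivial unitarizable simple $(\mathfrak{g},K)$-module $X$ that is simultaneously discretely decomposable as a $(\mathfrak{g}^\sigma,K^\sigma)$-module and as a $(\mathfrak{g}^\Gamma,K^\Gamma)$-module, so the sufficiency direction is immediate.

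For necessity, I would enumerate all candidate Klein four symmetric pairs $(\mathfrak{g},\mathfrak{g}^\Gamma)$ with $\mathfrak{g}^\Gamma$ noncompact that can satisfy both discrete-decomposability conditions in \problemref{14}, and show that only the pair $(\mathfrak{e}_{6(-14)},\mathfrak{so}(8,1))$ survives. The reduction already carried out in Subsection 4.1, via \propositionref{13}, \propositionref{23}, \propositionref{24}, \propositionref{25} together with the tables of \cite{KO1} and \cite{KO2}, trims the list of candidate triples $(\mathfrak{g},\mathfrak{g}^\sigma,\mathfrak{g}^\Gamma)$ to exactly four, giving four candidate pairs $(\mathfrak{g},\mathfrak{g}^\Gamma)$. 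Two of them, namely $(\mathfrak{e}_{6(-14)},\mathfrak{so}(8,1)\oplus\mathfrak{so}(2))$ and $(\mathfrak{e}_{6(-14)},\mathfrak{sp}(2,1)\oplus\mathfrak{so}(2))$, are excluded by \lemmaref{26} on the ground that they do not arise as Klein four symmetric pairs at all. The third is $(\mathfrak{e}_{6(-14)},\mathfrak{so}(8,1))$, which is the pair we want to retain. The only remaining candidate to rule out is therefore $(\mathfrak{e}_{6(-14)},\mathfrak{sp}(2,1)\oplus\mathfrak{su}(2))$, which \lemmaref{26} confirms is a genuine Klein four symmetric pair; for this case I would invoke \lemmaref{17}, which asserts that no nontrivial unitarizable simple $(\mathfrak{e}_{6(-14)},\mathrm{SO}(10)\times\mathrm{SO}(2))$-module is discretely decomposable as a $(\mathfrak{sp}(2,1)\oplus\mathfrak{su}(2),\mathrm{Sp}(2)\times\mathrm{SU}(2)^2)$-module. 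This completes the enumeration and forces $(\mathfrak{g},\mathfrak{g}^\Gamma) = (\mathfrak{e}_{6(-14)},\mathfrak{so}(8,1))$.

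The real substance, and the step I expect to be the main obstacle, is thus pushed down into \lemmaref{17}, which is where the criterion \corollaryref{12} is genuinely applied: \lemmaref{32} identifies the three real forms $\mathfrak{g}^{x_0}\cong\mathfrak{f}_{4(-20)}$, $\mathfrak{g}^{x_1}\cong\mathfrak{su}(4,2)\oplus\mathfrak{su}(2)$, and $\mathfrak{g}^{x_0 x_1}\cong\mathfrak{sp}(2,2)$, and then the tables of \cite{KO2} show that the first two symmetric pairs admit discretely decomposable unitarizable simple modules while the pair involving $\mathfrak{sp}(2,2)$ does not, which is precisely the hypothesis needed to apply \corollaryref{12}. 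At the level of \theoremref{18} itself the proof is a concise assembly of \propositionref{31} (for sufficiency) together with \lemmaref{26} and \lemmaref{17} (for necessity).
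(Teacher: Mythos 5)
Your sufficiency direction and your handling of the holomorphic case match the paper, but your necessity argument has a genuine gap: it silently assumes that the element $\sigma\in\Gamma$ witnessing the second condition of \problemref{14} defines a symmetric pair of \emph{holomorphic} type. The enumeration from Subsection 4.1 that you rely on---the one producing exactly four candidate triples $(\mathfrak{g},\mathfrak{g}^\sigma,\mathfrak{g}^\Gamma)$---was derived under precisely that hypothesis: one of the bulleted conditions there reads ``$(G,G^\sigma)$ is a symmetric pair of holomorphic type,'' and the paper justifies imposing it only by deferring the complementary case to the earlier work \cite{H3}. Consequently your enumeration does not exclude the possibility of some other Klein four symmetric pair $(G,G^\Gamma)$, with $G^\Gamma$ noncompact, admitting a nontrivial unitarizable simple $(\mathfrak{g},K)$-module that is discretely decomposable as a $(\mathfrak{g}^\Gamma,K^\Gamma)$-module and as a $(\mathfrak{g}^\sigma,K^\sigma)$-module \emph{only} for some $\sigma$ of anti-holomorphic type; such a pair need not appear among your four triples, yet it would contradict the ``only if'' direction of \theoremref{18}. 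So as written, your proof establishes the theorem only for pairs reachable through a holomorphic $\sigma$.

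The paper closes exactly this case: since $G=\mathrm{E}_{6(-14)}$ is of Hermitian type and each nonidentity $\sigma\in\Gamma$ commutes with the Cartan involution, $\sigma$ is either of holomorphic or of anti-holomorphic type, and the proof splits accordingly---the holomorphic branch is, as in your proposal, the assembly of \propositionref{31} and \lemmaref{17} (with \lemmaref{26} and the Subsection 4.1 reduction in the background), while the anti-holomorphic branch is settled by citing \cite[Proposition 13 \& Proposition 15]{H3}, which show that for anti-holomorphic $\sigma$ the answer is again only $(\mathfrak{e}_{6(-14)},\mathfrak{so}(8,1))$. To repair your proof, state this dichotomy explicitly and invoke \cite{H3} (or reprove its classification) for the anti-holomorphic branch; without that step the necessity direction is incomplete.
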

\begin{proof}
If $\sigma$ is such that $(\mathfrak{g},\mathfrak{g}^\sigma)$ is a symmetric pair of holomorphic type, the conclusion follows from \propositionref{31} and \lemmaref{17}. If $\sigma$ is such that $(\mathfrak{g},\mathfrak{g}^\sigma)$ is a symmetric pair of anti-holomorphic type, the conclusion follows from \cite[Proposition 13 \& Proposition 15]{H3}.
\end{proof}

\end{document}